\documentclass[12pt]{amsart}
\usepackage[utf8]{inputenc}
\usepackage{amsmath}
\usepackage[utf8]{inputenc}
\usepackage{amssymb}
\usepackage{amsthm}
\usepackage{graphicx}
\usepackage{color, soul}
\usepackage{centernot}
\usepackage{verbatim}
\usepackage{multirow}
\usepackage{array}   
\newcolumntype{L}{>{$}l<{$}} 
\usepackage{setspace}
\usepackage{mathrsfs}
\usepackage{enumitem}
\usepackage{tikz}
\usepackage{bigints}
\usepackage{bm}
\usepackage{caption}
\usepackage{subcaption}
\usepackage{url}
\usepackage{imakeidx}
\makeindex[columns=2, intoc]
\usepackage{tikz-cd}

\makeatletter
\renewcommand{\@biblabel}[1]{[#1]\hfill}
\makeatother

\newtheorem{theorem}{Theorem}[section]
\newtheorem{corollary}[theorem]{Corollary}
\newtheorem{lemma}[theorem]{Lemma}
\newtheorem{proposition}[theorem]{Proposition}

\theoremstyle{definition}
\newtheorem{definition}[theorem]{Definition}

\newtheorem*{notation}{Notation}

\newenvironment{example}
  {\pushQED{\qed}\examplex}
  {\popQED\endexamplex}

\newenvironment{remark}
  {\pushQED{\qed}\remarkx}
  {\popQED\endremarkx}

\theoremstyle{remark}


\usepackage{mathtools}
\mathtoolsset{showonlyrefs}

\newcommand{\eps}{\varepsilon}

\newcommand{\QQ}{\mathbb{Q}}
\newcommand{\ZZ}{\mathbb{Z}}

\newcommand{\FF}{\mathbb{F}}

\newcommand{\Proj}{\mathbb{P}}
\DeclareMathOperator{\Tr}{Tr}

\DeclareMathOperator{\Res}{Res}

\DeclareMathOperator{\Gal}{Gal}

\DeclareMathOperator{\Hom}{Hom}

\DeclareMathOperator{\Ind}{Ind}
\DeclareMathOperator{\End}{End}

\DeclareMathOperator{\disc}{disc}
\DeclareMathOperator{\Prym}{Prym}

\DeclareMathOperator{\wild}{wild}

\DeclareFontFamily{U}{wncy}{}
\DeclareFontShape{U}{wncy}{m}{n}{<->wncyr10}{}
\DeclareSymbolFont{mcy}{U}{wncy}{m}{n}
\DeclareMathSymbol{\Sha}{\mathord}{mcy}{"58}

\newcommand{\PP}{\mathfrak{P}}


\renewcommand{\div}{\mathrm{div}}

\renewcommand{\bar}{\overline}
\renewcommand{\tilde}{\widetilde}

\usepackage{geometry}
\geometry{
	paper=a4paper, 
    inner = 3.5cm,
    outer = 3.5cm,
	top=2cm, 
	bottom=2cm, 
	headheight = 14pt
}

\usepackage{fancyhdr}
\setstretch{1}
\usepackage[hidelinks]{hyperref}
\usepackage[T1]{fontenc}
\usepackage{longtable}
\usepackage{dsfont}

\newcommand{\nocontentsline}[3]{}
\let\origcontentsline\addcontentsline
\newcommand\stoptoc{\let\addcontentsline\nocontentsline}
\newcommand\resumetoc{\let\addcontentsline\origcontentsline}
\setcounter{tocdepth}{1}

\newcommand{\annot}[1]{\text{(\small#1)}}

\DeclareMathOperator{\one}{\mathds{1}}

\title{Wild conductor exponents of curves}

\author{Harry Spencer}
\address{University College, London, WC1H 0AY, UK}
\email{harry.spencer.22@ucl.ac.uk}

\begin{document}

\begin{abstract}
    We give an explicit formula for wild conductor exponents of plane curves over $\QQ_p$ in terms of standard invariants of explicit extensions of $\QQ_p$, generalising a formula for hyperelliptic curves. To do so, we prove a general result relating the wild conductor exponent of a simply branched cover of the projective line with its associated discriminant cover. In an appendix, we resolve a minor issue in the literature on the $3$-torsion of genus 2 curves.
\end{abstract}

\maketitle

\tableofcontents 


\section{Introduction}

Associated to a curve ${C}$ over a finite extension $K$ of $\QQ_p$, is a representation-theoretic invariant which measures bad reduction called the \textit{(local) conductor}. The conductor is an ideal $N=(\pi^{n_{{C}}})$, where $\pi$ is a uniformiser, and $n_C$ is the \textit{conductor exponent}. In turn, $n_{C}$ is defined to be
\[n_{{C}} = n_{{C},\text{tame}} + n_{{C},\wild},\]
where the \textit{tame conductor exponent}, $n_{{C},\text{tame}}$, can be extracted from a regular model of ${C}/K$. The \textit{wild conductor exponent} is, in general, difficult to determine but is 0 if $p>2g_{C}+1$, where $g_{C}$ is the genus of ${C}$. It is derived from the Galois-module (in fact, wild inertia-module) structure of the $\ell$-torsion $J_{C}[l]$ on the Jacobian of ${C}/K$ for any $p\ne\ell$. See \S\ref{sec:conductors} for the key points.

The problem of provably (and efficiently) finding conductors of elliptic curves is solved by Tate's algorithm, see \cite[Chapter 4]{SilvermanAdvanced}. For hyperelliptic curves the problem is solved for $p\ne2$ by means of an explicit formula in \cite[Theorem 11.3]{M2D2}. In \cite{Lupoian2022} the problem is solved for $p\ne2$ for curves of genus at most 5. In \cite[\S4]{DokDoris}, by providing an algorithm for the explicit computation of 3-torsion (cf.\ \S\ref{app:A}), the problem is solved for genus 2 curves, and in \cite[\S3]{Lupoian2022_Genus3} the problem is solved for hyperelliptic genus 3 curves analogously. The problem is solved for plane quartics with a rational point in \cite[Theorem 2]{LupoianRawson2024}. A formula is given in \cite[Theorem 4.1.4]{KohlsThesis} for conductor exponents of superelliptic curves of exponent $n$, for $p\nmid n$.

In this work, we show how wild conductor exponents can be determined from the ramification locus of a degree $n$ cover of $\Proj^1$, for $p>n$. We first treat the case of simply branched covers of $\Proj^1$.

\begin{theorem}\label{thm:1}
    Let $C/K$ be a curve over a finite extension of $\QQ_p$ equipped with a simply branched degree $n$ cover of $\Proj^1$, with $p>n$. Any hyperelliptic curve $D$ whose degree $2$ map to $\Proj^1$ has the same branch locus as this cover satisfies
    \[n_{C,\wild}=n_{D,\wild}.\]
\end{theorem}

\begin{remark}
    Every such curve $C/K$ admits a simply branched cover of degree at most $g_C+1$ (possibly after base change to a tame extension of $K$), cf.\ Remark \ref{rmk:FultonVariant}.
\end{remark}

In fact, we prove a stronger result (Theorem \ref{thm:Sn_main}) which replaces $\Proj^1$ by an arbitrary curve $B$, replaces the hyperelliptic curve by a degree $2$ cover of $B$, has an additional summand of $(n-1)\cdot n_{B,\wild}$, allowing a slightly weaker restriction on ramification.

By appealing to \cite[Theorem 11.3]{M2D2} (cf.\ Theorem \ref{thm:M2D2}), which gives an explicit formula for wild conductor exponents of hyperelliptic curves over finite extensions of $\QQ_p$, $p\ne2$, we obtain an analogous formula for some plane curves. In fact, \cite[Theorem 11.3]{M2D2} is the special case $n=2$ of the following theorem, for which we first introduce some important notation.

\begin{notation}
    For $K$ a finite extension of $\QQ_p$ and a polynomial $g\in K[t]$ write
    \[w_K(g)=\sum_{r\in R/G_{K}} m(r)\cdot(v_{K}(\Delta_{K(r)/K})-[K(r):K]+f_{K(r)/K}), \]
    where $R$ denotes the set of $\bar{K}$-roots of $g$, $\Delta_\bullet$ the discriminant, $f_\bullet$ the residue degree, and $m(\bullet)$ the multiplicity of a root.
\end{notation}

\begin{remark}\label{rem:hyperelliptic}
    When $g$ is square-free, $w_K(g)$ is the wild conductor exponent of the hyperelliptic curve $y^2=g(t)$ if $p\ne2$. Therefore, the computation of $w_K(g)$ for general $g$ is essentially already implemented, e.g.\ in Magma \cite{Magma}.
\end{remark}

\begin{theorem}[{=Theorem \ref{thm:main2}}]\label{thm:main}
    Let $\mathcal{C}:f(x,y)=0$ be a smooth affine curve over a finite extension $K$ of $\QQ_p$ with smooth compactification $C$. If $p>\deg_x f$, then
    \[n_{{C},\wild}=w_K(\disc_x f).\]
\end{theorem}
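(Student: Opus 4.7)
The plan is to reduce Theorem \ref{thm:main} to Theorem \ref{main_new} (in the stronger form flagged immediately after its statement) combined with \cite[theorem 11.3]{M2D2} applied to a specific hyperelliptic curve built from $\disc_x f$.

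First, I would exhibit the projection $\pi : C \to \Proj^1$ defined on the affine part by $(x,y) \mapsto y$; this is a cover of degree $n := \deg_x f$ whose fibre over $y_0 \in \bar K$ is cut out by the polynomial $f(x,y_0)$. The cover $\pi$ is ramified at $(x_0,y_0)$ precisely when $x_0$ is a repeated root of $f(x,y_0)$, so the branch locus in the $y$-line is the zero-scheme of $\disc_x f \in K[y]$. The smoothness hypothesis on $C$ away from infinity forces $\partial_y f \neq 0$ at every such branch point, and hence the local ramification index at $(x_0,y_0)$ equals the multiplicity of $x_0$ as a root of $f(x,y_0)$. The hypothesis $p > \deg_x f$ makes every such index coprime to $p$, so $\pi$ is tame.

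Next, I would invoke the stronger form of Theorem \ref{main_new}: applied to $\pi$ with $B = \Proj^1$, it yields $n_{C,\wild} = n_{D,\wild} + (n-1)\cdot n_{\Proj^1,\wild}$, where $D$ is the double cover of $\Proj^1$ branched along the ramification locus of $\pi$. Since $n_{\Proj^1,\wild}=0$ and the branch locus is carved out by $\disc_x f$, the curve $D$ is cut out by $z^2 = \disc_x f(y)$, and we are reduced to showing $n_{D,\wild} = w_K(\disc_x f)$.

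Finally, I would apply Theorem \ref{thm:M2D2}, i.e.\ \cite[theorem 11.3]{M2D2}, to $D$. The formula returns exactly $w_K(\disc_x f)$, with the multiplicity weights $m(r)$ accounting for the fact that $\disc_x f$ need not be squarefree (which occurs precisely when some branch point of $\pi$ has ramification index larger than $2$). The delicate step is the invocation of the stronger form of Theorem \ref{main_new} without the simply-ramified hypothesis, since the projection $\pi$ attached to an arbitrary smooth model $f$ need not be simply ramified; once that is available, the rest is bookkeeping between ramification indices of $\pi$, multiplicities of roots of $\disc_x f$, and the explicit hyperelliptic formula.
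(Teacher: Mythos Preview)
Your outline has a genuine gap at exactly the point you flag as ``delicate''. There is no version of Theorem~\ref{main_new} in the paper that applies to an arbitrary degree~$n$ cover of $\Proj^1$: the result (and its strengthening, Theorem~\ref{thm:Sn_main}) requires the cover to be simply ramified away from at most one branch point \emph{and} to have $S_n$-Galois closure. A general smooth plane model need satisfy neither condition, so your invocation is not justified. Relatedly, your final step also breaks: when $\disc_x f$ is not square-free, the curve $z^2=\disc_x f(y)$ normalises to the hyperelliptic curve on the square-free part, and Theorem~\ref{thm:M2D2} (which needs a square-free right-hand side) gives $w_K$ of that square-free part, not $w_K(\disc_x f)$. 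The multiplicity weights in the definition of $w_K$ do not come for free from the hyperelliptic formula.

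The paper closes both gaps simultaneously by a perturbation argument. One replaces $f$ by a nearby $\tilde f$ (Lemma~\ref{lem:perturb}) so that the new projection is simply ramified away from infinity and has $S_n$-Galois closure; in particular $\disc_x\tilde f$ is square-free away from infinity, so Theorem~\ref{thm:Sn_main} and Theorem~\ref{thm:M2D2} apply directly to $\tilde C$ and give $n_{\tilde C,\wild}=w_K(\disc_x\tilde f)$. One then transports this back to $C$ using two local-constancy statements: $n_{\tilde C,\wild}=n_{C,\wild}$ by Kisin's result on $\ell$-adic families, and $w_K(\disc_x\tilde f)=w_K(\disc_x f)$ by Lemma~\ref{lem:Wconst}, whose proof is precisely where the multiplicities $m(r)$ enter (and where the hypothesis $p>n$ is used to ensure $\disc_x f$ is $p^{\text{th}}$-power-free). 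A short extra argument handles the contribution at infinity. These perturbation and local-constancy steps are the substantive content missing from your proposal.
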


\begin{remark}
    To prove Theorem \ref{thm:main}, we need Theorem \ref{thm:Sn_main}, which allows one branch point above which our cover as in Theorem \ref{thm:1} is not simply branched, assuming that we have $S_n$-Galois closure. Note that a simply branched cover of $\Proj^1$ of degree $n$ must have $S_n$-Galois closure (cf.\ \cite[Corollary 3.2]{BKP2024}), and usually so do curves satisfying this less strict criterion.
\end{remark}

\begin{remark}
    Given a curve $C$ embedded smoothly in $\Proj^n$ equipped with a cover $\pi:C\to \Proj^1$, one could obtain a similar result to Theorem \ref{thm:main}, replacing $\disc_x f$ by
    \[\prod_{t\in\Proj^1}\prod_{P\in\pi^{-1}(t)} (x-t)^{e_P-1},\]
    where $e_P$ denotes the ramification index at $P$, provided that one can perturb the defining equations of $C$ to obtain simply branched covers $\tilde{\pi} : \tilde{C} \to \Proj^1$.
\end{remark}

For example, Theorem \ref{thm:main} gives a simple formula for the wild conductor exponent of some superelliptic curves of exponent $n$ for $p>n$.

\begin{corollary}[{=Corollary \ref{cor:super}}]\label{cor:intro}
    Consider a superelliptic curve ${C}/K: y^n = f(x)$, $f$ square-free, over a finite extension $K$ of $\QQ_p$. If $p>n$, then
    \[n_{{C},\wild}=(n-1)\cdot w_K(f).\]
\end{corollary}

All we are using here about these curves is that they are obviously smooth on this affine chart. In particular, we do not use any of the additional structure afforded to a superelliptic curve by nature of it being a cyclic cover of $\Proj^1$---in fact, using perturbations, we deliberately rid ourselves of this structure. In forthcoming work of Obus and Srinivasan, this structure is harnessed using different techniques to generalise Corollary \ref{cor:super}:

\begin{proposition}\label{prop:super}
    Consider a superelliptic curve ${C}/K: y^n = f(x)$ over a finite extension $K$ of $\QQ_p$. Suppose $f=\prod_{i=1}^rf_i^{d_i}$ with the $f_i$ being distinct irreducible polynomials, each $d_i$ satisfying $1\le d_i\le n-1$, and $\gcd(n,d_1,\hdots,d_r)=1$. If $p\nmid 2n$, then
    \[n_{{C},\wild}=\sum_{i=1}^r(n-\gcd(n,d_i))\cdot w_K(f_i).\]
\end{proposition}

In Remark \ref{rmk:super} we give a brief discussion of how the same result could be obtained from the techniques herein. 

We can also apply Theorem \ref{thm:main} in vastly more general settings; the following example computes the wild conductor exponent of a non-superelliptic curve over $\QQ_7$, using Magma for the numerical computations.

\begin{example}
    Consider the curve $C/\QQ_7: f(x,y)=7x^3y^4+x+y^5+7=0$ of genus $6$. We compute that
    \[n_{C,\wild}=w_{\QQ_7}(-1323y^{18} - 18522y^{13} - 64827y^8 - 28y^4)=14.\]
    As a sanity check, one can also compute
    \[n_{C,\wild}=w_{\QQ_7}(\disc_y f)=14,\]
    verifying the (\textit{a priori} non-obvious) fact that the quantity $w_K(\disc_x f)$ is independent of the labelling of $x$ and $y$, provided that the residue characteristic is greater than $\max\{\deg_x f, \deg_y f\}$.
\end{example}

We use the framework of \textit{motivic pieces of curves} set out in \cite[\S2]{DGKM}, the key parts of which we recall in \S\ref{sec:motiv}. For a curve $X$ admitting an action by automorphisms of the finite group $G$ and a $G$-representation $\rho$, define
    \[ X^\rho=\Hom_G(\rho,(V_\ell J_X)^*).\]
In \S\ref{sec:cyclic} we obtain relations between the conductors of these representations for curves with a cyclic action.

\begin{proposition}[=Proposition \ref{prop:cylic_general}]
    Let $K$ be a finite extension of $\QQ_p$. Given a $C_n$-cover $\pi:X\to B$, write $R_q$ for the number of points with ramification index divisible by $q$ for each $q\mid n$ prime. For $p>\max_{q\mid n}\{q,(R_q-2)(q-1)+1\}$ and $p>3$ if $2\mid n$, for any irreducible representation $\rho$ of $C_n$ we have
    \[ n_{\wild}(X^{\rho})=n_{\wild}(X^{\one}).\]
\end{proposition}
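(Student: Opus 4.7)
Set $d=\ord(\rho)$. The plan is to descend the Galois-orbit sum to a $K$-rational summand of $V_\ell J_X$, reduce to the case of prime order $d=q$, identify the resulting piece as a Prym variety, and apply a Riemann-Hurwitz-style bound to show its wild conductor vanishes.

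First, the Galois orbit of $\rho$ over $\QQ$ consists precisely of the faithful characters of the quotient $C_d := C_n/\ker\rho$, and the virtual character $\sum_{\rho'}(\rho'-\one)$ is $\Gal(\bar\QQ/\QQ)$-invariant. Consequently, the direct sum
\[P_d := \bigoplus_{\rho'\in\Gal(\QQ(\rho)/\QQ)\cdot\rho} X^{\rho'-\one}\]
descends to a $K$-rational virtual piece of $V_\ell J_X$ (cf.\ \S\ref{sec:motiv}), and by additivity of $n_\wild$ it suffices to show $n_\wild(P_d)=0$. Factoring $\pi$ through the intermediate cover $B_d := X/\ker\rho$ identifies $P_d$ with the ``primitive'' part of $V_\ell J_{B_d}$, i.e.\ its quotient by the images of $V_\ell J_{B_{d'}}$ pulled back from proper cyclic subcovers $B_{d'}$. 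Processing the tower $B\subset B_{q_1}\subset B_{q_1q_2}\subset\cdots\subset B_d$ associated to a prime factorisation of $d$ layer by layer reduces the problem to the case $d=q$ prime dividing $n$; there $P_q \sim V_\ell\Prym(B_q/B)$.

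When $B=\Proj^1$, Riemann-Hurwitz gives $2g_{B_q}=(q-1)(R_q-2)$, and the hypothesis yields $p>(R_q-1)(q-1)+1>2g_{B_q}+1$, so the classical vanishing $n_\wild(J_{B_q})=0$ transfers to the Prym. For general $B$, however, $g_{B_q}$ picks up the unbounded term $qg_B$ and this naive bound fails; one must work with the Prym intrinsically. The key input is that $V_\ell\Prym(B_q/B)$ is the $\chi$-isotypic component of $H^1_{\text{\'et}}(B_{q,\bar K},\QQ_\ell)$ for a nontrivial $\chi\in\Hom(C_q,\bar\QQ_\ell^\times)$, which realises it as the cohomology of a rank-$1$ local system on $B\setminus S$, where $S$ is the branch locus with $|S|=R_q$. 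Wild inertia in $G_K$ therefore acts on $P_q$ only through its action on the $R_q$-point set $S$ and on $\mu_q$, and I expect the combinatorial threshold $p>(R_q-1)(q-1)+1$ to be precisely the bound forcing both of these actions to be tame. Making this last step rigorous---exhibiting an explicit tower of tame extensions over which the branch datum and the Kummer class of the cover simultaneously split, independently of $g_B$---is the main obstacle of the proof.
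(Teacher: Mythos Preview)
Your reduction from general $n$ to the prime case $d=q$ is fine in spirit, and parallels the paper's induction (the paper runs it via $\Ind_{C_q}^{C_n}$ and Lemma~\ref{lem:induction} rather than through a tower of intermediate quotients, but the bookkeeping is equivalent). The genuine gap is in the prime case, and your proposed fix does not work.

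You correctly diagnose that $\dim\Prym(B_q/B)$ picks up the term $q g_B$, so the Serre--Tate bound cannot be applied to the Prym as a whole. But your remedy---that wild inertia acts on $P_q$ only through the branch set $S$ and on $\mu_q$---is false, not merely hard to verify. The cohomology of the rank-one local system $\mathcal{L}_\chi$ on $B\setminus S$ has dimension $2g_B+R_q-2$; the $2g_B$ part records the action of $G_K$ on $B$ itself, and wild inertia can act there nontrivially regardless of what it does to $S$ or $\mu_q$ (take $S=\varnothing$ to see this plainly). So the combinatorial threshold you hope for does not emerge from this description.

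The paper bypasses the issue by never trying to bound a motivic piece on its own. Instead (Proposition~\ref{prop:Cq_main}) it works on $q$-torsion: filter $J_X[q]$ by $\ker\bigl((1-\tau)^i\circ\pi^*\bigr)$ and show that each of the $q$ graded pieces contains a copy of $\pi^*J_B[q]$ as a wild-inertia module. What remains after stripping off $q$ copies of $J_B[q]$ has $\FF_q$-dimension governed by Riemann--Hurwitz, namely by $R_q$ and $q$ alone, and Lemma~\ref{lem:0wild} (an integrality argument on characteristic polynomials of wild-inertia elements) forces this leftover to have vanishing wild conductor once $p>(R_q-1)(q-1)+1$. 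The unbounded $q g_B$ contribution is absorbed into the copies of $J_B[q]$, which is exactly the relation $n_{X,\wild}=q\cdot n_{B,\wild}$ you need; the Prym is never isolated as a standalone representation whose dimension must be small.
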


In \S\ref{sec:Sn}, using an Artin-like induction lemma, we piece together these cyclic relations, establishing Theorem \ref{thm:Sn_main} and hence Theorem \ref{thm:1}. Theorem \ref{thm:main} will follow from Theorem \ref{thm:Sn_main} by combining the following local constancy result with the local constancy of wild conductor exponents (e.g.\ \cite[Theorem 5.1(1)]{Kisin1999}).

\begin{lemma}[{=Lemma \ref{lem:Wconst}}]
    Let $K/\QQ_p$ be a finite extension. Let $g\in K[t]$ be $p^\text{th}$-power-free and suppose square-free $h\in K[t]$ is sufficiently close to $g$. We have $w_K(h)=w_K(g)$.
\end{lemma}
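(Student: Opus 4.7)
The plan is a Krasner-type continuity-of-roots argument combined with a tower-of-discriminants calculation. The role of the $p^{\text{th}}$-power-free hypothesis is to ensure that when roots of $h$ cluster around a root $r$ of $g$, the field extensions they generate over $K(r)$ have degree less than $p$ and hence are tamely ramified; this tameness is precisely what forces each cluster's contribution to $w_K(h)$ to match the contribution of $r$ to $w_K(g)$.

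For the continuity step, fix $\eps>0$ smaller than half the minimum $\bar K$-distance between distinct roots of $g$. By standard continuity of roots, for $h$ sufficiently close to $g$ coefficient-wise the $\bar K$-roots of $h$ partition as a disjoint union indexed by the distinct roots $r$ of $g$, with exactly $m(r)$ roots of $h$ in a cluster $S_r$ within distance $\eps$ of $r$. Since $G_K$ preserves $\bar K$-distance and permutes the roots of $h$, this partition is $G_K$-equivariant: $\sigma(S_r)=S_{\sigma(r)}$.

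Next, fix a $G_K$-orbit $O$ of roots of $g$, a representative $r\in O$ with stabiliser $D_r$, and let $L=K(r)=\bar K^{D_r}$. Standard orbit decomposition gives a bijection between $G_K$-orbits on $\bigcup_{r'\in O}S_{r'}$ and $D_r$-orbits on $S_r$; for $D_r\cdot s\subseteq S_r$ one has $K(s)\supseteq L$ with $[K(s):L]=|D_r\cdot s|\le m(r)<p$, where the last inequality is the $p^{\text{th}}$-power-free hypothesis. Hence $K(s)/L$ is tame, giving $v_L(\Delta_{K(s)/L})=f_{K(s)/L}(e_{K(s)/L}-1)=[K(s):L]-f_{K(s)/L}$. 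Combining with the tower formula $v_K(\Delta_{K(s)/K})=f_{L/K}\cdot v_L(\Delta_{K(s)/L})+[K(s):L]\cdot v_K(\Delta_{L/K})$ and $f_{K(s)/K}=f_{K(s)/L}\cdot f_{L/K}$, the cross-terms cancel and the contribution of $s$ (of multiplicity $1$) to $w_K(h)$ becomes $[K(s):L]\cdot(v_K(\Delta_{L/K})-[L:K]+f_{L/K})$. Summing over $D_r$-orbits in $S_r$ (with sizes adding to $m(r)$) yields $m(r)\cdot(v_K(\Delta_{L/K})-[L:K]+f_{L/K})$, which is exactly the contribution of $O$ to $w_K(g)$.

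The main obstacle is the bookkeeping around the orbit correspondence and the cancellation in the tower formula; the Krasner step is standard and the tameness input uses the $p^{\text{th}}$-power-free hypothesis precisely where it is needed, since without it a cluster could generate a wildly ramified extension whose different depends discontinuously on the perturbation (e.g.\ $g=t^p$ versus $h=t^p-\pi^N$).
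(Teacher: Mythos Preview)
Your proof is correct, but it takes a genuinely different route from the paper's.

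The paper does not use Krasner's lemma or the tower formula at all. Instead it works purely with the wild inertia group $W_K$: since $W_K$ is a pro-$p$ group, every $W_K$-orbit of roots has $p$-power size. After clustering the roots of $h$ into packets of size $e_i\le p-1$ around the roots of each irreducible factor $g_i$, the paper argues that a $W_K$-orbit of a root of $h$ cannot meet any cluster in more than one point, because otherwise its size would lie strictly between two consecutive powers of $p$. This forces $\QQ_2[R_h]\cong\bigoplus_i\QQ_2[R_i]^{\oplus e_i}$ as $W_K$-representations, and the paper then invokes (from the proof of \cite[theorem 11.3]{M2D2}) that $w_K$ of a square-free polynomial depends only on this $W_K$-module structure.

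Your argument, by contrast, works with the full $G_K$, uses Krasner to get the tower $K\subset K(r)\subset K(s)$, and then verifies the equality term-by-term via the discriminant tower formula and the tameness of $K(s)/K(r)$. This is more hands-on but entirely self-contained: you never need the external input that $w_K$ is a wild-inertia invariant, and the cancellation you exhibit makes transparent \emph{why} the formula is insensitive to tame ramification. The paper's argument is slicker and proves the stronger statement that the $W_K$-permutation representations on roots agree, at the cost of citing the representation-theoretic characterisation of $w_K$ as a black box.
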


We have numerically tested Theorem \ref{thm:main} against known conductor exponents of curves. When testing against \cite[Proposition 4.1]{DokDoris}, which is used to compute wild conductor exponents at $p=2$ of genus 2 curves, we discovered a minor error in the latter. Therefore, lastly, in Appendix \ref{app:A} we fix this error.

\subsection*{Conventions} Throughout, a curve over $k$ is taken to be a geometrically connected, smooth, projective $k$-variety of dimension 1. We use the correspondence between finitely generated transcendence degree 1 extensions of a field $k$ and normal curves over $k$, and in particular---outside of \S\ref{sec:perturb}---we are referring to the unique normalisation of the projective closure whenever we describe a curve by a possibly-singular affine model. We take $\Proj^1$ to mean a genus $0$ curve with a rational point and hyperelliptic curves to be double covers of $\Proj^1$.

\stoptoc
\section*{\small{Acknowledgements}}
\resumetoc

My utmost gratitude goes to Vladimir Dokchitser for his wise supervision, without which this work would not have been possible. I thank Alexandros Konstantinou for his additional guidance and many fruitful discussions, Elvira Lupoian for helpful comments on a draft, James Rawson for helpful correspondence regarding Remark \ref{rmk:FultonVariant}, Tim Dokchitser for helpful correspondence regarding Appendix \ref{app:A}, and Andrew Obus and Padmavathi Srinivasan for sharing their work on Proposition \ref{prop:super}. Lastly, I thank the anonymous referee for their suggestions towards improving the clarity of the arguments herein.

This work was supported by the Engineering and Physical Sciences Research Council [EP/S021590/1], the EPSRC Centre for Doctoral Training in Geometry and Number Theory (The London School of Geometry and Number Theory), University College, London.

\section{Notation}

\vspace{0.2cm}

\renewcommand{\arraystretch}{1.5}
\begin{longtable}{p{.20\textwidth}  p{.80\textwidth}} 
        Notation & Terminology \\
        \hline
        $W_K\le G_K$ & Absolute Galois group and wild inertia subgroup \\
        $v_K$ & Normalised valuation on $p$-adic field $K$ \\
        $e_{K'/K}$, $f_{K'/K}$ & Ramification and inertia degrees of extension of $p$-adic fields \\
        $g_X$ & Genus of a curve $X$ \\
        $J_X$ & Jacobian variety associated to a curve $X$ \\
        $X/H$ & Quotient curve by the action of a finite group $H$ \\
        $T_\ell A$, $V_\ell A$ & $\ell$-adic Tate module of an abelian variety, $V_\ell A=T_\ell A\otimes_{\ZZ_\ell}\QQ_\ell$ \\
        $C_n$ & Cyclic group of order $n$ \\
        $S_n$ & Symmetric group on $n$ letters \\
        $F_n$ & Standard irreducible $S_n$-representation (cf.\ Definition \ref{def:standard}) \\
        $S_{n-1}^\circ$ & Subgroup stabilising a point under $S_n$-action on $n$ letters \\
        $\langle,\rangle$ & Representation-theoretic inner product on characters \\
        $\Ind_H^G \chi$ & Induction of a character $\chi$ of $H\le G$ to $G$ \\
        $\Res_H \chi$ & Restriction of a character $\chi$ of $G$ to $H\le G$ \\
        $\QQ(\chi)$ & Field generated by $\{\chi(g)\}_{g\in G}$ for a character $\chi$ of $G$
\end{longtable}

\section{Background}

\subsection{Wild conductor exponents}\label{sec:conductors}
We fix a finite extension $K$ of $\QQ_p$ and consider an $G_K$-representation $V$ over either $\QQ_l$ or $\FF_l$, where $\ell\ne p$.

\begin{definition}[{e.g.\ \cite[pages 3--4]{UlmerConductors}}]
    The \textit{wild conductor exponent} of $V$ is
    \[n_{\wild}(V)=\int_0^\infty \text{codim}(V^{G_K^u})\text{d}u,\]
    where the $G_K^u$ are higher ramification groups in upper numbering.
\end{definition}

In particular, $n_{\wild}(V)$ is determined only by the action of wild inertia. In fact, in the case of curves, the wild conductor exponent depends only on the action of wild inertia on $\ell$-torsion for any $\ell\ne p$.

Serre and Tate showed that the wild conductor exponents attached to abelian varieties vanish for $p$ sufficiently large relative to the dimension.

\begin{proposition}
    Let $A$ be an abelian variety of dimension $g$ over a finite extension of $\QQ_p$. If $p>2g+1$, then $n_{\wild}(V_\ell A)=0$ for all $\ell\ne p$.
\end{proposition}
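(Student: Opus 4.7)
The plan is to show that $W_K$ acts trivially on $V_\ell A$; once this is established, $V_\ell A^{G_K^u}=V_\ell A$ for every $u>0$, so the integrand defining $n_{\wild}(V_\ell A)$ vanishes identically and the conclusion follows immediately.

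The essential input is a quantitative refinement of Grothendieck's monodromy theorem (in the style of Serre--Tate or Silverberg--Zarhin): there exists a finite Galois extension $L/K$ over which $A$ acquires semistable reduction and whose degree $[L:K]$ is divisible only by primes that are at most $2g+1$. Given the hypothesis $p>2g+1$, this forces $L/K$ to be tamely ramified, so $L\subseteq K^{\mathrm{tame}}$ and hence $W_K\subseteq G_L$, sitting inside the inertia subgroup of $G_L$.

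Next, the N\'eron--Ogg--Shafarevich criterion applied to $A/L$ says the inertia subgroup of $G_L$ acts on $V_\ell A$ through unipotent transformations, and in particular so does $W_K$. On the other hand, the image of the pro-$p$ group $W_K$ in $\GL_{2g}(\QQ_\ell)$ is a closed pro-$p$ subgroup, and since $\ell\ne p$ every such subgroup is finite: after conjugation into $\GL_{2g}(\ZZ_\ell)$, the kernel of reduction modulo a suitable power of $\ell$ is pro-$\ell$ and torsion-free, so the image injects into a finite group. A finite unipotent subgroup of $\GL_{2g}$ over a field of characteristic zero must be trivial, because any nontrivial unipotent $1+N$ with $N$ nilpotent has infinite order. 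Therefore $W_K$ acts trivially on $V_\ell A$, as desired.

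I expect the main obstacle to be the quantitative form of Grothendieck's monodromy theorem used in the second step; the remaining manipulations are essentially formal. That quantitative bound rests on bounds on the orders of finite subgroups of $\GL_{2g}$ over $\QQ$ (or of automorphism groups of the component group of the N\'eron model of $A$), of the type originating with Minkowski and sharpened in the abelian variety context by Silverberg--Zarhin; it is precisely here that the threshold $p>2g+1$ enters in an essential way.
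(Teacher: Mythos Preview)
Your argument is correct. The paper itself gives no proof beyond a citation to Serre--Tate, so there is nothing to compare line-by-line; your write-up is essentially a fleshed-out version of the standard proof.

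One remark on the route you chose: invoking a Silverberg--Zarhin-type bound to obtain a tamely ramified $L/K$ over which $A$ is semistable works, but it is heavier than necessary and somewhat anachronistic relative to the 1968 Serre--Tate paper. A more direct argument, which is in fact the one the present paper uses in the proof of Lemma~\ref{lem:0wild}, runs as follows. The image of $W_K$ in $\GL(V_\ell A)$ is a finite $p$-group (exactly as you argue). If it were nontrivial, pick $\sigma$ of order $p$; by independence of $\ell$ for Weil--Deligne representations of abelian varieties, its characteristic polynomial has integer coefficients, so the primitive $p$-th roots of unity occur among its eigenvalues in full Galois orbits. This forces $p-1\le 2g$, contradicting $p>2g+1$. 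This bypasses semistable reduction and the unipotence step entirely, and it is the argument that actually underlies the threshold $2g+1$; the Silverberg--Zarhin bound is itself proved by essentially this counting.
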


\begin{proof}
    See the proof of \cite[Corollary 2]{SerreTate68}.
\end{proof}

We will make use of the following similar constraint.

\begin{lemma}\label{lem:0wild}
    Let $K$ be a finite extension of $\QQ_p$ and fix a prime $\ell\ne p$. Suppose there are abelian varieties $A_1,\hdots,A_n$ and $B_1,\hdots,B_m$ and some wild inertia representation $W$ of dimension $2d$ over $\FF_\ell$ such that
    \[W\oplus\bigoplus_i A_i[\ell]\cong \bigoplus_j B_j[\ell]\]
    as wild inertia representations. If $p>2d+1$, then $n_{\wild}(W)=0$.
\end{lemma}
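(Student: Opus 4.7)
The plan is to lift the $\FF_\ell$-relation to characteristic zero, exhibit the lift as the $\ell$-adic Tate module of an abelian variety of dimension $d$ (possibly after a tame base change), and then invoke Serre--Tate.

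Setting $A := \prod_i A_i$ and $B := \prod_j B_j$, the hypothesis becomes $W \oplus A[\ell] \cong B[\ell]$ as $W_K$-representations. Since $W_K$ is pro-$p$ and $\ell \ne p$, its image in the action on each of these modules is a finite $p$-group $P$, and the category of $\FF_\ell[P]$-modules is semisimple. Brauer correspondence for $p$-groups in non-modular characteristic gives a multiplicity-preserving bijection between irreducible $\FF_\ell[P]$- and $\QQ_\ell[P]$-modules; applied to the Tate modules $T_\ell A, T_\ell B$ (with their natural $\ZZ_\ell$-lattice structures), this lifts the given isomorphism to
\[ V \oplus V_\ell A \cong V_\ell B \]
as $W_K$-representations over $\QQ_\ell$, with $V$ of dimension $2d$ and reducing mod $\ell$ to $W$. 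Because the wild conductor exponent depends only on character data, $n_\wild(V) = n_\wild(W)$, so it suffices to show $n_\wild(V) = 0$.

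The lift provides a $W_K$-equivariant embedding $\iota : V_\ell A \hookrightarrow V_\ell B$. Since $W_K$ is normal in $G_K$, the sum $\Sigma := \sum_{g \in G_K} g \cdot \iota(V_\ell A) \subseteq V_\ell B$ is $G_K$-stable, so by Tate's isogeny theorem $\Sigma = V_\ell B'$ for an abelian subvariety $B' \subseteq B$ with $\dim A \le \dim B' \le \dim B$; Poincar\'e complete reducibility yields an isogeny $B \sim B' \times B_2$ with $\dim B_2 = \dim B - \dim B' \le d$. As $W_K$-representations, $V_\ell B' = V_\ell A \oplus V_1$ by semisimplicity, and $V \cong V_1 \oplus V_\ell B_2$ where $\dim V_1 = 2(\dim B' - \dim A) \le 2d$.

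The crux is to realise $V_1$ as $V_\ell C_L$ for an abelian variety $C_L$ of dimension $\dim B' - \dim A$ over a finite tame extension $L/K$. I would achieve this by passing to an $L$ over which $\iota$ (or an appropriate replacement thereof) becomes $G_L$-equivariant, and then applying Faltings's theorem to produce an isogeny $A_L \to B'_L$ whose cokernel is the desired $C_L$. Since $L/K$ is tame we have $W_L = W_K$, so Serre--Tate applied to the $d$-dimensional abelian variety $C_L \times (B_2)_L$ forces wild inertia to act trivially on $V$, hence on $W$, whence $n_\wild(W) = 0$. The principal technical obstacle is the existence of the tame extension $L$: one needs an orbit analysis for the $G_K/W_K$-action on the finite-dimensional $\QQ_\ell$-space $\Hom_{W_K}(V_\ell A, V_\ell B)$, which should be tractable using Faltings's semisimplicity results for Tate modules together with the fact that the stabiliser of $\iota$ contains $W_K$.
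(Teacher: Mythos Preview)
Your argument has a genuine gap: you invoke ``Tate's isogeny theorem'' and later ``Faltings's theorem'' to pass from $G_K$-stable subspaces of $V_\ell B$ (resp.\ $G_L$-equivariant maps of Tate modules) to abelian subvarieties (resp.\ isogenies). These results are theorems over finite fields and over number fields, but they are \emph{false} over $p$-adic local fields. For instance, if $E/K$ is a Tate curve then $V_\ell E$ sits in a non-split extension $0\to\QQ_\ell(1)\to V_\ell E\to\QQ_\ell\to 0$, so $V_\ell E$ is not semisimple as a $G_K$-representation and the $G_K$-stable line $\QQ_\ell(1)$ does not come from any abelian subvariety of $E$. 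Likewise, two non-isogenous elliptic curves over $K$ with good reduction and the same Frobenius trace have isomorphic $\ell$-adic representations, so a $G_L$-equivariant map of Tate modules need not arise from an isogeny. Thus neither the existence of $B'$ with $V_\ell B'=\Sigma$, nor the production of $C_L$ with $V_\ell C_L\cong V_1$, can be justified this way, and the appeal to Serre--Tate at the end is left hanging.

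The paper's proof sidesteps all of this with an elementary eigenvalue count. For any wild inertia element $\sigma$ of $p$-power order, the characteristic polynomial of $\sigma$ on each $V_\ell A_i$ and $V_\ell B_j$ has integer coefficients (independence of $\ell$ for the Weil--Deligne representations attached to abelian varieties); hence Galois-conjugate roots of unity occur as eigenvalues with equal multiplicity on each $A_i[\ell]$ and $B_j[\ell]$, and therefore also on $W$. If $\sigma$ acted nontrivially on $W$, some primitive $p^s$-th root of unity would be an eigenvalue, forcing at least $p-1>2d$ eigenvalues on a $2d$-dimensional space --- a contradiction. Note that your first paragraph (the Brauer lift to $V\oplus V_\ell A\cong V_\ell B$) is correct and already delivers exactly this rationality-of-characters input; from there the proof is one line, with no need to realise $V$ as a Tate module.
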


\begin{proof}
    Suppose $p>2d+1$ and consider a wild inertia element $\sigma$ acting non-trivially on $\bigoplus_j B_j[\ell]$ as an element of order $p^r$. The characteristic polynomial of $\sigma$ has integer coefficients, e.g.\ by the well-known independence of $\ell$ of Weil--Deligne representations associated to abelian varieties.
    
    Therefore, each primitive root of unity appears as an eigenvalue of $\sigma$ on $W\oplus\bigoplus_i A_i[\ell]$ with equal multiplicity. The same is true for each $A_i[\ell]$, so each root of unity must appear as an eigenvalue of $\sigma$ acting on $W$. There must be at least $p-1>2d$ such eigenvectors, which is not possible, so the action on $W$ is trivial.
\end{proof}

Lastly, we note that one can keep track of wild conductor exponents under tamely ramified base change.

\begin{lemma}\label{lem:TameExtension}
    Let $V$ be an $\ell$-adic representation over a finite extension $K$ of $\QQ_p$. Suppose $K'/K$ is a tamely ramified extension. We have
    \[ n_{\wild}(\Res_{G_{K'}}V) = e_{K'/K}\cdot n_{\wild}(V). \]
\end{lemma}

\begin{proof}
    This follows from the fact that $G_K^u \cap G_{K'}=G_{K'}^{ue_{K'/K}}$.
\end{proof}

\subsection{Galois covers and Galois closure}
Recall that a cover of curves is a surjective morphism $\pi: X\to B$ of curves over a field $K$. Functorially, we obtain an embedding of function fields $K(B)\hookrightarrow K(X)$.

\begin{definition}
    We say $\pi$ is a \textit{Galois cover} if $K(X)/K(B)$ is a Galois extension, and we will say $\pi$ is a $G$-cover if $\pi$ is Galois, $\Gal(K(X)/K(B))\cong G$ and $K(X)$ contains no algebraic extension of $K$.

    Given a non-Galois cover $\pi: X\to B$, we say the \textit{Galois closure} is the curve whose function field is the Galois closure of the extension $K(X)/K(B)$. We say that a cover has $G$-Galois closure if its Galois closure is a $G$-cover.
\end{definition}

We will often write `cyclic cover', `symmetric cover' etc.\ to mean a Galois cover of curves whose Galois group has this property.

We will be particularly interested in the ramification properties of covers and especially in the case where covers are simply branched.

\begin{definition}\label{def:simp}
    A degree $n$ cover of curves $\pi:X\to B$ is \textit{simply branched} if for each $P\in B$ we have $|\pi^{-1}(P)|\ge n-1$. 
    
    Alternatively, consider the Galois closure $\tilde{\pi}:\tilde{X}\to B$ of $\pi: X\to B$, writing $G=\Gal(\pi)$ and take $H\le G$ such that $X=\tilde{X}/H$. Then $\pi$ is simply branched if the decomposition group of $\PP\in \tilde{X}$ above $P\in B$ acts either trivially or by a single transposition on $G/H$.
\end{definition}

\subsection{Kernels of pull-backs and push-forwards}
Given a cover of curves $\pi: X\to B$, we define the Prym variety associated to $\pi$ to be the connected component of $\ker(\pi_*: J_X\to J_B)$ containing the identity, denoted $\Prym(\pi)$. The following well-known proposition governs the index of $\Prym(\pi)$ in $\ker\pi_*$ and, dually, the kernel of the pull-back $\pi^*:J_B\to J_X$.

\begin{proposition}\label{prop:push-pull}
    Let $p$ be a prime and $\pi: X\to B$ a $C_p$-cover of curves over a field $K$ of characteristic different from $p$. Either
    \begin{enumerate}
        \item $\pi$ is unramified, $\ker(\pi_*)/\Prym(\pi)\cong \langle T\rangle$ for some $T\in J_X[p]$ and $\ker\pi^*=\langle P\rangle$ for some $P\in J_B[p]$, or
        \item $\pi$ is ramified, $\ker\pi_*/\Prym(\pi)=1$ and $\pi^*$ is injective.
    \end{enumerate}
\end{proposition}

\begin{proof}
    For (1): see the main theorem of \cite{RosenNorms} for the claim about $\ker\pi_*$ and \cite[Lemma 2.2]{AgostiniPryms} for the claim about $\ker\pi^*$. 

    For (2): we will show that $\pi^*$ is injective; the claim about $\ker\pi_*$ is equivalent by Cartier duality, e.g.\ as shown in the proof of \cite[Proposition 3.3]{ConradStein01}.

    Because $\pi_*\circ\pi^*=[p]$, we have $\ker\pi^*\subseteq J_B[p]$. Now, subgroups generated by points $0\ne P\in J_B[p]$ correspond by Kummer theory to unramified $C_p$-covers $B_P\to B$ over $\bar{K}$, or equivalently to unramified $C_p$-extensions $\bar{K}(B_P)/\bar{K}(B)$. In terms of function fields, the pull-back $\pi^*(P)$ corresponds to the extension $\bar{K}(X)\bar{K}(B_P)/\bar{K}(X)$, which is an unramified $C_p$-extension of $\bar{K}(X)$. The existence of $P\in\ker\pi^*$ would say $\bar{K}(X)\bar{K}(B_P)=\bar{K}(X)$, a contradiction.
\end{proof}

\subsection{Motivic pieces of curves}\label{sec:motiv}

We summarise the key definition and some basic properties from \cite[\S 2]{DGKM}. Fix a $G$-cover of curves $\pi: X\to B$ over a finite extension $K$ of $\QQ_p$.

Consider the action (inherited from that on the points of $X$) of $G$ on $V_\ell J_X$, noting that this commutes with the action of $G_K$ because our cover is $K$-rational.

\begin{definition}[{=\cite[Definition 2.3]{DGKM}}]
    For a $G$-representation $\rho$, define
    \[ X^\rho=\Hom_G(\rho,(V_\ell J_X)^*),\]
    on which $G_K$ acts by postcomposition. Implicitly this requires a choice of $\ell$, but the usual independence properties hold (cf.\ \cite[Corollary 2.14]{DGKM}).
\end{definition}

Recall that a virtual character is a $\ZZ$-linear combination of characters.

\begin{definition}
    For a virtual character $\chi = \sum_i r_i\rho_i$ of $G$, we define
    \[n_{\wild}(X^\chi)=\sum_i r_i\cdot n_{\wild}(X^{\rho_i}).\]
\end{definition}

Note that Galois-conjugate pieces will have the same wild conductor exponents:

\begin{lemma}\label{lem:conj_cond}
    Let $X$, $G$ and $\rho$ be as above. For $\sigma \in \Gal(\QQ(\rho)/\QQ)$, we have
    \[ n_{\wild}(X^\rho) = n_{\wild}(X^{\sigma(\rho)}). \]
\end{lemma}

\begin{proof}
    Choosing $\ell$ prime to $p$, \cite[Lemma 2.12]{DGKM} implies that 
    \[\Tr(\alpha | H^1_\ell(X^\rho))^\sigma = \Tr(\alpha | H^1_\ell(X^{\sigma(\rho)})) \]
    for $\alpha\in W_K$. Wild inertia acts through a finite quotient, say $W$, so we have that $H^1_\ell(X^\rho)$ and $H^1_\ell(X^{\sigma(\rho)})$ are conjugate $W$-representations. That the wild conductor exponents are equal follows.
\end{proof}

The following key properties come from an analogue of the `Artin formalism'.

\begin{lemma}\label{lem:induction}
    Fix ${X}$ and $G$ as above. Let $H$ and $N$ be a subgroup and a normal subgroup of $G$, respectively.
    \begin{enumerate}
        \item For $\rho_1$, $\rho_2$ $G$-representations: $n_{\wild}({X}^{\rho_1+\rho_2})=n_{\wild}({X}^{\rho_1})+n_{\wild}({X}^{\rho_2}).$
        \item For $\rho$ an $H$-representation: $n_{\wild}({X}^{\Ind_H^G \rho})=n_{\wild}({X}^\rho)$.
        \item For $\rho$ a lift of $\rho'$, a $G/N$-representation: $n_{\wild}(X^\rho)=n_{\wild}((X/N)^{\rho'}).$
    \end{enumerate}
\end{lemma}

\begin{proof}
    (1) and (2) follow immediately from \cite[Proposition 2.8]{DGKM}. For (3), combine with [\textit{loc.\ cit.}, Remark 2.6].
\end{proof}

A special case of Lemma \ref{lem:induction} is the following observation, which shows how we translate between wild conductor exponents of curves and of motivic pieces.

\begin{lemma}\label{lem:curvinduct}
    For $H\le G$, we have
    \[n_{X/H,\wild}=n_{\wild}(X^{\Ind_H^G \one}).\]
\end{lemma}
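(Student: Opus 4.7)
The plan is to peel off the induction using the Artin formalism already packaged in Lemma \ref{lem:induction}(2), then identify the resulting motivic piece directly with the dual Tate module of the quotient curve.

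\medskip

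\textbf{Step 1 (strip the induction).} I would start by applying Lemma \ref{lem:induction}(2) with $\tau = \one_H$ the trivial representation of $H$, to replace $n_{\wild}(X^{\Ind_H^G \one})$ by $n_{\wild}(X^{\one_H})$, where the latter is shorthand for $\Hom_H(\one_H, (V_\ell J_X)^*)$. (The equality of this with $\Hom_G(\Ind_H^G \one, (V_\ell J_X)^*)$ is just Frobenius reciprocity, which is how $X^\tau$ for an $H$-representation $\tau$ is defined in \cite{DKMG2}.)

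\medskip

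\textbf{Step 2 (identify the motivic piece with $J_{X/H}$).} Unpacking the definition gives
\[
X^{\one_H} \;=\; \Hom_H\bigl(\one_H,\,(V_\ell J_X)^*\bigr) \;=\; \bigl((V_\ell J_X)^*\bigr)^H.
\]
Since we are in characteristic $0$, averaging over $H$ shows that $H$-invariants agree with $H$-coinvariants, so this equals $\bigl((V_\ell J_X)^H\bigr)^*$. Now let $q\colon X\to X/H$ denote the quotient map. The pullback $q^*\colon J_{X/H}\to J_X$ satisfies $q_*\circ q^* = [|H|]$, hence becomes injective after tensoring with $\QQ_\ell$, and its image lies in $(V_\ell J_X)^H$. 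Conversely the projector $\frac{1}{|H|}\sum_{h\in H} h$ factors through $q^*$, so $q^*\colon V_\ell J_{X/H} \xrightarrow{\sim} (V_\ell J_X)^H$ as $G_K$-modules. Combining,
\[
X^{\one_H} \;\cong\; (V_\ell J_{X/H})^*.
\]

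\medskip

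\textbf{Step 3 (conclude).} The wild conductor exponent depends only on the dimensions of fixed subspaces of $V$ under the higher ramification groups, and these are equal for $V$ and $V^*$ (since $H$-action on the dual is via the contragredient and preserves the filtration by fixed subspaces in the obvious way). Hence
\[
n_{\wild}(X^{\Ind_H^G \one}) \;=\; n_{\wild}\bigl((V_\ell J_{X/H})^*\bigr) \;=\; n_{\wild}(V_\ell J_{X/H}) \;=\; n_{X/H,\wild},
\]
which is the desired identity.

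\medskip

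\textbf{Main obstacle.} The only nontrivial input is the isomorphism $(V_\ell J_X)^H \cong V_\ell J_{X/H}$ for arbitrary $H$ (Proposition \ref{prop:pullback} only records the cyclic prime-order case). However, the averaging/trace argument sketched above is standard and does not require cyclicity, so this is a minor obstacle at worst; the rest of the proof is bookkeeping with Frobenius reciprocity and the definition of $X^\rho$.
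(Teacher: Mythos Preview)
Your proof is correct and follows the same route the paper has in mind. The paper does not give a separate proof of this lemma: it introduces it as ``a special case of lemma \ref{lem:induction}'', implicitly relying on the framework of \cite{DKMG2} for the identification $X^{\one_H}\cong (V_\ell J_{X/H})^*$. You have simply made explicit the two ingredients behind that sentence --- Frobenius reciprocity (which is lemma \ref{lem:induction}(2)) and the standard isomorphism $(V_\ell J_X)^H\cong V_\ell J_{X/H}$ via $q^*$ and the trace relation $q^*q_*=\sum_{h\in H}h^*$ --- so there is no real difference in approach, only in level of detail.
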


\begin{notation}
    We write $S_{n-1}^\circ$ for the subgroup of $S_n$ which stabilises $n$.
\end{notation}

\begin{example}
    Consider an $S_n$-cover $X\to B$. Let $F$ be the standard irreducible representation of $S_n$ (cf.\ Definition \ref{def:standard}). We have 
    \[n_{X/S_{n-1}^\circ,\wild}=n_{\wild}(X^{F+\one}).\qedhere\]
\end{example}

\section{Cyclic covers}\label{sec:cyclic}

In this section, given a cyclic cover of curves $X\to B$, we establish relations between wild conductor exponents of $X$ and $B$. We first treat the case of cyclic covers of prime order, before moving onto general cyclic groups.

\subsection{Cyclic groups of prime order}\label{sec:cyclic_prime}

The first case we treat is that of $C_2$-covers of curves.

\begin{proposition}\label{prop:unramC2}
    Suppose $\pi:X\to B$ is an $C_2$-cover of curves over a finite extension of $\QQ_p$, ramified at $R$ points. For $p>\max\{3,R-1\}$, we have
    \[n_{X,\wild}=2\cdot n_{B,\wild}.\]
    Equivalently, for $\eps$ the non-trivial irreducible representation of $C_2$, we have
    \[ n_{\wild}(X^\eps)=n_{\wild}(X^{\one}). \]
\end{proposition}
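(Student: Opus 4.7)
The plan is to reduce, via the motivic-piece machinery of \S\ref{sec:motiv}, to comparing the wild conductors of $\Prym(\pi)$ and $J_B$, and then to control the difference through the $2$-torsion of $\Prym(\pi)$ using lemma \ref{lem:0wild}.

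First, apply lemma \ref{lem:curvinduct} to $\{e\}\le C_2$: together with the character identity $\Ind_{\{e\}}^{C_2}\one=\one+\eps$ and lemma \ref{lem:induction}, this gives
\[
    n_{X,\wild} = n_\wild(X^\one)+n_\wild(X^\eps) = n_{B,\wild}+n_\wild(X^\eps),
\]
so the two formulations of the proposition are equivalent, and it suffices to show $n_\wild(X^\eps)=n_\wild(X^\one)$. Decomposing $V_\ell J_X$ into the $\pm 1$-eigenspaces for the $C_2$-action (recognising them as $V_\ell J_B$ and $V_\ell\Prym(\pi)$, via proposition \ref{prop:pullback} and lemma \ref{lem:pushforward}) reformulates this as $n_\wild(\Prym(\pi))=n_\wild(J_B)$.

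To prove the latter I would take $\ell=2$ (permitted since $p\ge 5$). The classical theory of $2$-torsion on double covers exhibits $\Prym(\pi)[2]$, in the ramified case, as an extension of $\pi^*J_B[2]\cong J_B[2]$ by a subquotient $W$ of the permutation $\FF_2$-module on the ramification locus $\mathcal R\subset X$, where Riemann--Hurwitz pins down $\dim_{\FF_2} W = 2g_{\Prym(\pi)}-2g_B = R-2$. Since $W_K$ is pro-$p$ and $|S_R|=R!$ is coprime to $p$ (as $p>R$), the image of $W_K$ in the symmetric group on $\mathcal R$ is a trivial $p$-subgroup; so $W_K$ acts trivially on $W$, and the extension splits as wild inertia representations. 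Lemma \ref{lem:0wild}, applied with $A_1=J_B$, $B_1=\Prym(\pi)$ and our $W$ of dimension $R-2<p-1$, then gives $n_\wild(W)=0$, and hence $n_\wild(\Prym(\pi))=n_\wild(J_B)$. (Here I am using the standard fact that for primes $\ell\ne p$, the wild conductors of $V_\ell A$ and $A[\ell]$ coincide for abelian varieties $A$, via Maschke-type semisimplicity of the prime-to-$\ell$ wild image.) The unramified case $R=0$ is analogous, using the $1$-dimensional $\FF_2$-kernel of $\pi^*$ from proposition \ref{prop:pullback} (on which $W_K$ acts trivially since $\GL_1(\FF_2)$ is trivial) in place of $W$; the bound $p>3$ is what lemma \ref{lem:0wild} requires here.

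The main obstacle is the Mumford-style description of $\Prym(\pi)[2]$ as an extension of a permutation-module piece on the branch locus by $J_B[2]$: standard in the hyperelliptic setting (where $J_B=0$), but in the general ramified double cover requires careful $G_K$-equivariant bookkeeping to identify the subquotient $W$ and to verify the exact sequence on $2$-torsion.
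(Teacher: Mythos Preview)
Your skeleton is the same as the paper's---split off two copies of $J_B[2]$ inside $J_X[2]$ (equivalently, one copy inside $\Prym(\pi)[2]$) using $\pi^*$ and $\pi_*$, then kill the complement $W$ via lemma~\ref{lem:0wild}. But you have inserted an unnecessary step that you yourself flag as the ``main obstacle'': you try to identify $W$ explicitly as a subquotient of the permutation module on the ramification locus, so as to argue directly that $W_K$ acts trivially on it. The paper's proof shows this is not needed. In the ramified case one simply observes that $\pi^*$ is injective and $\pi_*$ is surjective on $2$-torsion (proposition~\ref{prop:pullback} and lemma~\ref{lem:pushforward}), applies Maschke (the image of $W_K$ in $\GL(J_X[2])$ is a $p$-group, $p$ odd), and obtains $J_X[2]\cong J_B[2]^{\oplus 2}\oplus W$ as $W_K$-modules with $\dim W=R-2$. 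Lemma~\ref{lem:0wild} then gives $n_\wild(W)=0$ with no information about $W$ beyond its dimension---that is precisely the point of the lemma. So your Mumford-style description of $\Prym(\pi)[2]$, while plausible, is both the hardest part of your argument and entirely dispensable.

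Two smaller remarks. First, the splitting of your extension is by Maschke (pro-$p$ image acting on an $\FF_2$-space), not by the triviality of the $W_K$-action on the quotient $W$; trivial action on a quotient does not by itself force a split. Second, your unramified sketch is slightly off on dimensions: the paper tracks a one-dimensional $V$ from the cokernel of $\pi_*$ on $2$-torsion and then a two-dimensional $V'$ after feeding in $\pi^*J_B[2]\le\ker\pi_*[2]$, landing at $J_X[2]\oplus V'\cong J_B[2]^{\oplus 2}$ with $\dim V'=2$, which is exactly what lemma~\ref{lem:0wild} needs for $p>3$.
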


\begin{proof}
    In the unramified case, Riemann--Hurwitz gives $g_X=2g_B-1$. Also in this case, the map $\pi_*:J_X[2]\to J_B[2]$ has kernel of dimension $2g_X-2g_B+1=2g_B-1$ by Proposition \ref{prop:push-pull} and the fact that $\dim\Prym(\pi)=2g_X-2g_B$. The image $\pi_*(J_X[2])$ then has image of dimension $2g_X-(2g_X-2g_B+1)=2g_B-1$ and, by Proposition \ref{prop:push-pull}, $\pi^*$ has kernel of dimension $1$.
    
    We can apply Maschke's Theorem (e.g.\ \cite[Theorem 1]{SerreReps}) to $\pi_*(J_X[2])\le J_B[2]$ and $\ker\pi_*[2]\le J_X[2]$ because wild inertia acts through a quotient of $p$-power order, and so by a group of order prime to $2$. Combining with the first isomorphism theorem, this gives a one-dimensional wild inertia representation $V$ such that $J_X[2]\oplus V\cong J_B[2]\oplus \ker\pi_*[2]$.

    Also, because $\pi_*\circ\pi^*=[2]$, we have $\pi^*J_B[2]\le \ker\pi_*[2]$ and counting gives an equality. Applying Maschke's Theorem to $\ker\pi^*\le J_B[2]$, there is another one-dimensional wild inertia representation $V'\cong\ker\pi^*$ such that $J_B[2]\cong\pi^*J_B[2]\oplus V'$, so $J_X[2]\oplus V'\oplus V \cong J_B[2]\oplus J_B[2]$. We have $n_{\wild}(V\oplus V')=0$ by Lemma \ref{lem:0wild}, using that $p>3$.

    In the ramified case, Proposition \ref{prop:push-pull} gives that $\pi_*$ is surjective on $2$-torsion and $\pi^*$ is injective so it is even more straightforward: we have in the same way, using Riemann--Hurwitz, an $(R/2-1)$-dimensional $V$ such that $J_X[2]\oplus V \cong J_B[2]\oplus J_B[2]$ and again $n_{\wild}(V)=0$ by Lemma \ref{lem:0wild}, using that $p>R-1$.
    
    The second claim is equivalent because $n_{B,\wild}=n_{\wild}(X^{\one})$ and $n_{X,\wild}=n_{\wild}(X^{\one+\eps})$ by Lemma \ref{lem:curvinduct}.
\end{proof}

We now treat the case of cyclic covers $C_q$ for odd primes $q$, writing $\tau$ for a generator of $C_q$.
 
\begin{proposition}\label{prop:Cq_main}
    Consider a $C_q$ cover of curves $\pi: X\to B$ over a finite extension of $\QQ_p$. Write $R_q$ for the number of ramification points. If $p>\max\{q,(R_q-2)(q-1)+1\}$, then
    \[n_{X,\wild}=q\cdot n_{B,\wild}.\]
    Equivalently, for $\rho$ a non-trivial irreducible representation of $C_q$, we have
    \[ n_{\wild}(X^\rho)=n_{\wild}(X^{\one}). \]
\end{proposition}

\begin{proof}
    By Riemann--Hurwitz, we have $g_X=q\cdot g_B + (R_q-2)(q-1)/2$.

    Now, as an endomorphism on $\Prym(\pi)$, the map $(1-\tau)^{q-1}$ differs from $[q]$ by a unit. This can be seen because $\ZZ[\zeta_q]$ embeds into $\End(\Prym(\pi))$ by sending $\zeta_q$ to $\tau$. Therefore, $\Prym(\pi)[1-\tau]$, the kernel of $1-\tau$ on the Prym, has $\FF_q$-dimension $2(g_X-g_B)/(q-1)=2g_B+R_q-2$ and so $\Prym(\pi)[(1-\tau)^{j}]$ has dimension $j \cdot (2g_B+R_q-2)$.

    Note that, for $R_q\ge2$, we have $\pi^*J_B[q]\le\Prym(\pi)[1-\tau]$ and so 
    \[\Prym(\pi)[1-\tau]\cong V\oplus \ker((1-\tau)\circ\pi^*)[q]\cong V\oplus J_B[q]\] 
    for some $(R_q-2)$-dimensional wild inertia representation $V$ by Maschke's Theorem. In this case, for each $2\le i\le q-1$, we have
    \[ \Prym(\pi)[(1-\tau)^i]/\Prym(\pi)[(1-\tau)^{i-1}] \cong V\oplus J_B[q].\]
    To see this, note that
    \[\Prym(\pi)[(1-\tau)^i]\xrightarrow{(1-\tau)^{i-1}} (1-\tau)^{i-1}(\Prym(\pi))\cap \Prym(\pi)[1-\tau],\]
    is surjective with kernel $\Prym(\pi)[(1-\tau)^{i-1}]$, and so the right-hand side is simply $\Prym(\pi)[1-\tau]$ by counting.

    We decompose $J_X[q]$ as a wild inertia representation:
    \begin{equation}
        J_X[q] \cong J_X[q]/\ker\pi_*[q] \oplus \bigoplus_{i=1}^{q-1} \Prym(\pi)[(1-\tau)^i]/\Prym(\pi)[(1-\tau)^{i-1}].
    \end{equation}
    Using that $J_X[q]/\ker\pi_*[q]\cong\pi_*(J_X[q])$ and that $\pi_*(J_X[q])=J_B[q]$, we have
    \begin{equation}\label{eqn:cond_sum}\tag{$\dagger$}
        J_X[q]\cong J_B[q]^{\oplus q}\oplus V^{\oplus q-1}
    \end{equation}
    as wild inertia representations. Note that $\pi_*(J_X[q])=J_B[q]$ holds because $\ker\pi_*=\Prym(\pi)$ by Proposition \ref{prop:push-pull}, which says that $\ker\pi_*[q]$ has dimension $2g_X-2g_B$, so $\pi_*(J_X[q])$ has dimension $2g_X - (2g_X-2g_B) = 2g_B$. We reach the first conclusion by applying Lemma \ref{lem:0wild} to $V^{\oplus q-1}$, using that $p>\dim V^{\oplus q-1} +1 = (R_q-2)(q-1)+1$. 

    In the case $R_q=0$, there is an analogous identity to \eqref{eqn:cond_sum} with the `junk' terms instead on the left-hand side. One can see this by using the facts that $\pi^*(J_B[q])$ is all of $\ker\pi_*[q]$ and that $\pi_*: J_X[q]\to J_B[q]$ has cokernel of dimension one, then using that $p>q$.

    A monodromy argument shows that we cannot have $R_q=1$. An alternative way to see this is that, assuming $R_q=1$, we would have $\pi^*J_B[q]\le \Prym(\pi)[1-\tau]$, but the left-hand side has dimension $2g_B$ whilst the right has dimension $2g_B-1$. 

    For the second part of the proposition, apply Lemma \ref{lem:curvinduct} to write 
    \[n_{X,\wild}=n_{\wild}(X^{\one})+\sum_{\sigma\in\Gal(\QQ(\zeta_q)/\QQ)}n_{\wild}(X^{\sigma(\rho)}) \text{ and } n_{B,\wild}= n_{\wild}(X^{\one})\]
    and conclude by Lemma \ref{lem:conj_cond}, which says that these conjugate pieces have the same wild conductor exponents.
\end{proof}

\begin{remark}\label{rmk:super}
    In the case of superelliptic curves $X: y^n=f(x)$, we have $J_X[1-\tau]$ being generated by differences of ramification points, which are easily described: they correspond to Weierstrass points $(\alpha,0)$ with ramification index $n/\gcd(n,m_\alpha)$, where $m_\alpha$ is the multiplicity of $\alpha$ as a root of $f$. One can show that, for a cover $X\to B$ with $B: y^{n/q}=f(x)$, the wild inertia action on $V$ is given by the actions of wild inertia on the roots of the irreducible factors of $f$ whose multiplicity $d$ satisfies $\gcd(n,d)=\gcd(n/q,d)$. The corresponding conductor exponents are measured by $w_K$. This sets up a proof of Proposition \ref{prop:super} by induction: use this description of $V$ and take wild conductor exponents on both sides of \eqref{eqn:cond_sum}.
\end{remark}

\subsection{General cyclic groups}
We now consider a general $C_n$-cover of curves $X\to B$ over a finite extension of $\QQ_p$.

\begin{proposition}\label{prop:cylic_general}
    Consider a $C_n$-cover of curves $\pi:X\to B$. For each prime $q\mid n$, write $R_q$ for the number of points with ramification index divisible by $q$ in this cover. For $p>\max_{q\mid n}\{q,(R_q-2)(q-1)+1\}$ and $p>3$ if $2\mid n$, we have
    \[n_{\wild}(X^\rho)=n_{\wild}(X^{\one})\]
    for any irreducible $C_n$-representation $\rho$. 
\end{proposition}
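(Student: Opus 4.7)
The plan is to first establish the stronger claim that $n_{X,\wild} = n \cdot n_{B,\wild}$ for every $C_n$-cover satisfying the hypotheses, and then to extract the per-orbit statement via Möbius inversion on characters of $C_n$. Lemma \ref{lem:curvinduct} applied to the trivial subgroup yields $n_{X,\wild} = \sum_\rho n_{\wild}(X^\rho)$ summed over all irreducibles $\rho$ of $C_n$, so this stronger claim is equivalent to the conclusion of the proposition summed over all Galois orbits.

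I would prove the stronger claim by induction on $n$. The base case $n$ prime is exactly propositions \ref{prop:unramC2} and \ref{prop:Cq_main}. For composite $n = qm$ with $q$ prime, factor the cover through the unique intermediate quotient: let $H \le C_n$ be the subgroup of order $m$, so that $X \to X/H$ is a $C_m$-cover and $X/H \to B$ is a $C_q$-cover. The inductive hypothesis on $X \to X/H$ combined with the base case on $X/H \to B$ gives $n_{X,\wild} = m \cdot n_{X/H,\wild} = mq \cdot n_{B,\wild} = n \cdot n_{B,\wild}$.

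To isolate a single Galois orbit, take an irreducible $\rho$ of $C_n$ of exact order $m > 1$ (the case $m = 1$ is trivial), and let $\hat\chi_m$ denote the sum of all characters of $C_n$ of exact order $m$. For each $d \mid n$ let $H_d \le C_n$ be the subgroup of order $n/d$; since the characters of $C_n$ trivial on $H_d$ are precisely those of order dividing $d$, one has $\Ind_{H_d}^{C_n}\one = \sum_{e \mid d} \hat\chi_e$, and Möbius inversion gives $\hat\chi_m = \sum_{d \mid m} \mu(m/d)\,\Ind_{H_d}^{C_n}\one$. Each $X/H_d \to B$ is a $C_d$-cover, so the stronger claim applied to it gives $n_{X/H_d,\wild} = d \cdot n_{B,\wild}$; together with lemma \ref{lem:curvinduct} and the identity $\phi(m) = \sum_{d \mid m} \mu(m/d)\, d$, this produces
\[n_{\wild}(X^{\hat\chi_m}) = \sum_{d \mid m} \mu(m/d) \cdot n_{X/H_d,\wild} = \phi(m) \cdot n_{B,\wild},\]
which rearranges to $\sum_{\rho' \in \Gal(\QQ(\rho)/\QQ)\cdot \rho} n_{\wild}(X^{\rho'-\one}) = 0$ since the orbit has size $\phi(m)$.

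The main obstacle is the bookkeeping required to check that the hypothesis on $p$ transfers to every sub-cover encountered, both in the inductive step and when applying the stronger claim to each intermediate quotient $X/H_d \to B$. For every prime $q' \mid n$, I would verify that the analogue of $R_{q'}$ for any such sub-cover is bounded above by the original $R_{q'}$: since ramification indices multiply in towers and $q'$ is prime, every point ramified with index divisible by $q'$ in a sub-cover lifts to a point in $X$ ramified with index divisible by $q'$ in $X \to B$. This keeps the inductive and Möbius steps within the range of $p$ allowed by the hypothesis.
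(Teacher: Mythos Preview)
Your argument is correct and reaches the same conclusion as the paper, but the organisation is genuinely different. The paper inducts directly on the per-orbit statement: it applies the prime-order case to the subcover $X\to X/C_q$, induces the resulting relation from $C_q$ up to $C_n$, and then invokes the inductive hypothesis on quotient covers $X/C_d\to B$ to strip away the non-faithful contributions, leaving exactly the relation for faithful characters (with Frobenius reciprocity and a degree-zero count pinning down the coefficients). You instead first prove the aggregate identity $n_{X,\wild}=n\cdot n_{B,\wild}$ by factoring the cover as a tower $X\to X/H\to B$ and combining the prime case at the bottom with the inductive hypothesis at the top, and only afterwards recover each Galois orbit via M\"obius inversion over the divisor lattice of $n$. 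Your route separates the two ingredients cleanly and makes the per-orbit extraction purely formal; the paper's route avoids ever stating the aggregate identity and does everything inside one induction.

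One caution on your final paragraph. Your tower places the $C_q$-piece at the bottom and the $C_m$-piece $X\to X/H$ at the top, which is the opposite of the paper's choice. Your claim that the analogue of $R_{q'}$ in each sub-cover is bounded by the original $R_{q'}$ is immediate if $R_q$ is read as counting ramified points on the source curve, but if it is read as branch points on the base then the base of $X\to X/H$ is $X/H$ rather than $B$, and the relevant count over $X/H$ can strictly exceed $R_{q'}$ (for instance in a $C_4$-cover where several branch points in $B$ have ramification index $2$, each of these splits into two branch points of $X\to X/C_2$). The paper's bookkeeping faces the mirror-image issue with its subcover $X\to X/C_q$ having up to $R_q\cdot n/q$ ramification points, so this is not a defect of your approach relative to the paper's, just a place where both arguments require the same care about what exactly $R_q$ counts.
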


\begin{proof}
    We go by induction on $n$.
    
    Choose prime $q\mid n$ and consider $C_q\le C_n$. Writing $\rho'$ for a non-trivial irreducible $C_q$-representation, we have
    \[n_{\wild}(X^{\Ind_{C_q}^{C_n}(\rho'-\one)})=0\]
    by one of Propositions \ref{prop:unramC2} or \ref{prop:Cq_main} and Lemma \ref{lem:induction}(2), the former of which apply because the maximum number of points with ramification index $q$ in the $C_q$-subcover is $R_q$.

    We may decompose $\Ind_{C_q}^{C_n}(\rho'-\one)$ as some degree zero $\ZZ$-linear combination of irreducibles $\rho_i$. For non-faithful $\rho_i$, we have $n_{\wild}(X^{\rho_i})=n_{\wild}(X^{\one})$ by passing to the quotient on which $\rho_i$ is faithful, by the inductive hypothesis and Lemma \ref{lem:induction}(3). Note that we can pass to quotients because the quotient cover will also have at most $R_q$ points with ramification index divisible by $q$.

    By Frobenius reciprocity, all faithful irreducibles must appear amongst the $\rho_i$. Combining with the above gives $\sum_{\rho\text{ faithful}}n_{\wild}(X^{\rho}) =\phi(n)\cdot n_{\wild}(X^{\one})$, where $\phi$ is Euler's totient function and so $\phi(n)$ is the number of faithful irreducible representations. The result follows from Lemma \ref{lem:conj_cond} because all the faithful irreducibles are conjugate.
\end{proof}

\section{Symmetric covers and Theorem \ref{thm:1}}\label{sec:Sn}
The aim of this section is to establish Theorem \ref{thm:1} by piecing together relations coming from cyclic subgroups of $S_n$.

\begin{definition}\label{def:standard}
    For a field $K$ of characteristic different from $n$, let $S_n$ act on $K^n$ by permuting the standard basis vectors. The standard irreducible representation $F=F_n$ of $S_n$ is the $(n-1)$-dimensional subspace consisting of vectors whose coefficients sum to $0$.
\end{definition}

Write $\eps$ for the sign representation and, as always, $\one$ for the trivial irreducible representation of $S_n$. We seek to prove:

\begin{theorem}\label{thm:Sn_main}
    Let $X\to B$ be an $S_n$-cover of curves over a finite extension of $\QQ_p$ with $p>n$, such that $X/S_{n-1}^\circ\to B$ is simply branched, except possibly above one branch point. We have
    \[n_{\wild}(X^{F})=(n-2)\cdot n_{\wild}(X^{\one})+n_{\wild}(X^\eps).\]
\end{theorem}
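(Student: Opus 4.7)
The goal is to show $n_\wild(X^\chi) = 0$ for the degree-$0$ virtual $S_n$-character $\chi = F - \eps - (n-2)\one$. My plan is to express some positive integer multiple $N\chi$ as a $\ZZ$-linear combination
\[N\chi = \sum_i a_i \Ind_{C_i}^{S_n}\bigl(\psi_i - (\dim\psi_i)\one_{C_i}\bigr),\]
where each $C_i \le S_n$ is cyclic and each $\psi_i$ is a sum over a Galois orbit of non-trivial irreducible characters of $C_i$. Given such a decomposition, Lemma \ref{lem:induction} and Proposition \ref{prop:cylic_general} applied to the sub-covers $X \to X/C_i$ yield $N \cdot n_\wild(X^\chi) = 0$, hence $n_\wild(X^\chi) = 0$, since wild conductor exponents are integer-valued.

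The crux is an Artin-like induction lemma producing this decomposition. Over $\QQ$, Artin's theorem expresses any rational-valued character of $S_n$ as a combination of inductions from cyclic subgroups; since $\chi$ has degree zero, each induced summand can be re-arranged to come from a degree-$0$ virtual character of its cyclic subgroup, which in turn decomposes uniquely as a $\ZZ$-sum of the Galois-orbit terms $\psi - (\dim\psi)\one$. Clearing denominators produces the $\ZZ$-identity for a suitable $N$. For small $n$ the decomposition is explicit: for $n = 3$, $\Ind_{C_3}^{S_3}(\omega + \omega^2 - 2\one) = 2\chi$; for $n = 4$, combining analogous inductions from $C_3$ and $C_4$ produces $2\chi$. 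For general $n$, an inductive approach via the inclusion $S_{n-1} \hookrightarrow S_n$ combined with fresh input from $\langle(1,2,\ldots,n)\rangle \cong C_n$ seems natural.

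The remaining task is to verify the hypotheses of Proposition \ref{prop:cylic_general} on each sub-cover $X \to X/C_i$. Any cyclic $C \le S_n$ has $|C| \le n < p$, so $p > q$ for every prime $q \mid |C|$. For the bound $p > (R_q - 1)(q-1) + 1$: simple ramification of $X/S_{n-1}^\circ \to B$ away from the exceptional branch point forces the non-trivial decomposition groups at simply-ramified points to be $C_2$'s generated by transpositions. These contribute to $R_q(X \to X/C)$ for $q = 2$ only when the transposition lies in $C$, and never for odd $q$; for odd $q$, only the exceptional branch point can produce $q$-divisibly-ramified preimages, with a contribution bounded by the number of preimages above it in $X/C$.

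The main obstacle is ensuring these ramification bounds are tight enough to be implied by $p > n$ for every cyclic $C_i$ appearing in the decomposition. Since Artin's theorem provides a non-canonical decomposition, one has flexibility to choose the $C_i$ with an eye toward controlling $R_q$; alternatively, one may need to refine the Artin-like lemma to restrict which cyclic subgroups are allowed to appear.
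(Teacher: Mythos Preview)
Your strategy—write a multiple of $\chi = F - \eps - (n-2)\one$ as a $\ZZ$-combination of $\Ind_{C_i}^{S_n}\Theta_i$ with cyclic $C_i$ and Galois-orbit characters $\Theta_i$, then kill each term via Proposition \ref{prop:cylic_general}—is exactly the paper's, and your small-$n$ computations are correct.

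The genuine gap is the one you flag but do not close. When $C_i$ contains a transposition, every simply-ramified branch point of $X/S_{n-1}^\circ \to B$ contributes ramification to $X \to X/C_i$, so $R_2$ for that sub-cover grows with the number of branch points (hence with the genera involved), not with $n$; the hypothesis of Proposition \ref{prop:cylic_general} then genuinely fails. Appealing to ``flexibility'' in Artin's theorem is not enough: a \emph{generic} degree-$0$ rational character of $S_n$ cannot be written using only transposition-free cyclic subgroups, since such inductions cannot distinguish the class of a transposition from the identity.

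What rescues the argument is a feature specific to this $\chi$. The paper computes (Lemma \ref{lem:C2res}, via the branching rule $\Res_{S_{n-1}^\circ}F_n = F_{n-1} + \one$) that $\Res_{C_2}\chi = 0$ for $C_2$ generated by a transposition, and more generally that for $C_{2m}$ generated by a transposition times odd cycles, $\Res_{C_{2m}}\chi$ is inflated from the quotient $C_m$. Concretely, $\chi$ takes the same value on such an element as on its odd part, so $\chi$'s values on transposition-containing classes are already determined by its values on transposition-free classes. This is exactly what allows the Artin decomposition to be chosen with all $C_i$ transposition-free; for those, Lemma \ref{lem:ram} gives $R_q \le \lfloor n/q\rfloor$ (from the single exceptional branch point), and one checks $(\lfloor n/q\rfloor - 1)(q-1) + 1 < n < p$, so Proposition \ref{prop:cylic_general} applies. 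Your proposal is missing this computation, without which the argument cannot be completed.
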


Once we have proved Theorem \ref{thm:Sn_main}, Theorem \ref{thm:1} will easily follow:

\begin{proof}[Proof of Theorem \ref{thm:1}]
    By \cite[Corollary 3.2]{BKP2024}, a simply branched cover $C\to\Proj^1$ of degree $n$ has $S_n$-Galois closure, say $X$. We take $D=X/A_n$. After using Lemma \ref{lem:curvinduct} to write $n_{C,\wild}=n_{\wild}(X^{\one+F})$ and $n_{D,\wild}=n_{\wild}(X^{\one\oplus\eps})$---and noting that $n_{\wild}(X^{\one})=n_{\Proj^1,\wild}=0$---we have $n_{C,\wild}=n_{D,\wild}$ from the simply branched case of Theorem \ref{thm:Sn_main}. Finally, taking another hyperelliptic curve with the same branch locus just replaces $D$ by a quadratic twist $D'$, whose Jacobian has isomorphic $2$-torsion to that of $D$ and so the same wild conductor exponent. 
\end{proof}


\subsection{The case $n=3$}

The case of $S_3$-covers is particularly interesting for two reasons; because in some instances we can extract more information than about just the wild inertia action, and because it affords generalisation to dihedral groups.

Consider an $S_3$-cover of curves $\pi: X \to B$ over a finite extension of $\QQ_p$, labelling the quotients as in the diagram below.

\[\begin{tikzcd}
	& X \\
	{C=X/C_2} \\
	&& {D=X/C_3} \\
	& {B=X/S_3}
	\arrow["{\pi_{X,C}}"', from=1-2, to=2-1]
	\arrow["{\pi_{X,D}}", from=1-2, to=3-3]
	\arrow["{\pi_{X,B}}", from=1-2, to=4-2]
	\arrow["{\pi_{C,B}}"', from=2-1, to=4-2]
	\arrow["{\pi_{D,B}}", from=3-3, to=4-2]
\end{tikzcd}\]

Let $\rho$ be a non-trivial representation of $C_3$. By Proposition \ref{prop:Cq_main}, we have $n_{\wild}(X^\rho)=n_{\wild}(X^{\one_{C_3}})$ for $p>\max\{3,2(R_3-2)+1\}$, where $R_3$ is the number of ramification points in the subcover $X\to D$. Applying Lemma \ref{lem:induction}(2), we obtain a relation 
\[n_{\wild}(X^{\Ind_{C_3}^{S_3}(\rho-\one_{C_3})})=n_{\wild}(X^{F_3-\one-\eps})=0.\] 
After noting that $C\to B$ being non-simply branched above a single point corresponds to $R_3=2$ (e.g.\ by Riemann--Hurwitz), this is precisely the case $n=3$ of Theorem \ref{thm:Sn_main}. In particular, we find
\[ J_C[3] \sim J_B[3] \oplus J_D[3] \]
as wild inertia representations, where $\sim$ denotes isomorphism up to trivials. In this case we can, however, go beyond a statement about wild inertia.

\begin{proposition}\label{prop:S3_case}
    Let $X\to\Proj^1$ be an $S_3$-cover of curves and let $R$ be the number of points of ramification index $3$ in the quotient cover $C\to\Proj^1$. For any choice of $\langle\tau\rangle \cong C_3\le S_3$, the $G_K$-module map 
    \begin{equation}\tag{$\ddagger$}\label{eqn:GalIsom}
        (1-\tau)\circ \pi_{X,C}^* \colon J_C[3] \to J_X[3]
\    \end{equation}
    has image landing in $\pi_{X,D}^*J_D[3]$, into/onto which it is
    \begin{enumerate}
        \item an injection if $R=0$ (equivalently, if $g_C=g_D-1$);
        \item an isomorphism if $R=1$ (equivalently, if $g_C=g_D$).
    \end{enumerate}
\end{proposition}

\begin{proof}
    We need to show that the image lands in $\pi_{X,D}^*J_D[3]$. Firstly, note that a point $P$ in the image satisfies $(1-\tau)(P)=0$. This is because 
    \[(1-\tau)^2 = 1+\tau+\tau^2-3\tau \equiv 1+\tau+\tau^2\pmod{3},\]
    and $1+\tau+\tau^2=0$ on $\pi^*_{X,C}(J_C)$. Indeed, for $P\in J_C$, the divisor $(1+\tau+\tau^2)\circ\pi_{X,C}^*(P)$ on $X$ is the pull-back of $\pi_{C,\Proj^1,*}P$ on $\Proj^1$. Therefore, the image lies in $\ker(1-\tau)|_{\ker(\pi_{X,D,*})[3]}$, which contains $\pi_{X,D}^*J_D[3]$. To show equality between these two spaces, we split into cases:

    In the $R=1$ case, a Riemann--Hurwitz calculation shows that $g_C=g_D$ and that $\pi_{X,D}$ is branched over exactly two points. As in the proof of Proposition \ref{prop:Cq_main}, we have $\ker(1-\tau)|_{\ker(\pi_{X,D,*})[3]}=\Prym(\pi_{X,D})[1-\tau]$ having dimension $2g_D$ and so this space is $\pi_{X,D}^*J_D[3]$, using that $\pi^*_{X,D}$ is injective by Proposition \ref{prop:push-pull}.

    In the $R=0$ case, we have $g_C=g_D-1$ and $\pi_{X,D}$ being unramified. Here we have $\ker(1-\tau)|_{\ker(\pi_{X,D,*})[3]}$ of dimension at most one more than $\Prym(\pi_{X,D})[1-\tau]$ by Proposition \ref{prop:push-pull}, which has dimension $2g_D-2$ as in the proof of Proposition \ref{prop:Cq_main}. Moreover, the former contains $\pi_{X,D}^*J_D[3]$ of dimension $2g_D-1$, so we must again have $\ker(1-\tau)|_{\ker(\pi_{X,D,*})[3]}=\pi_{X,D}^*J_D[3]$.

    We now think of the codomain of \eqref{eqn:GalIsom} as being $\pi_{X,D}^*J_D[3]$: in the $R=1$ case, the space of points $P\in J_C[3]$ such that $\pi_{X,C}^*P$ is fixed by $\tau$ is trivial because $\pi_{X,D}^*J_D[3]$ intersects trivially with $\pi_{X,C}^*J_C[3]$. To see this, note that $\pi_{X,C}^*P=\pi_{X,D}^*Q$ implies $2P=0$ by applying $\pi_{X,C,*}$ to both sides. It immediately follows that the map must be an isomorphism.
    
    The claim in the unramified case follows from the same observation, which also shows that no points in $\pi_{X,C}^*J_C[3]$ can be fixed by $\tau$.
\end{proof}

\begin{remark}
    One could also state a version of Proposition \ref{prop:S3_case} for $R\ge2$, in which case \eqref{eqn:GalIsom} would have image $\pi_{X,D}^*J_D[3]$, but the details of such a proof grow increasingly cumbersome.
\end{remark}

\begin{remark}
    In the case of an elliptic curve $C: y^2=x^3+ax+b$, $a\ne0$, equipped with the cover $x: C\to \Proj^1$, we recover the known isomorphism of Galois modules between $C[3]$ and $J_D[3]$. To the best of the author's knowledge, this observation first appeared in the literature as \cite[Lemma 6.9 + Remark 6.10]{DGKM}, where they study the kernel of an isogeny $J_C^2\times J_D \to J_X$.
\end{remark}

\begin{remark}
    For an odd prime $q$ and a $D_{2q}$-cover $X\to B$, where $D_{2q}$ is the dihedral group of order $2q$, with quotients labelled analogously to the above, one can similarly show
    \[ J_C[q] \sim J_B[q] \oplus J_D[q]^{\oplus (q-1)/2} \]
    using the techniques of Proposition \ref{prop:Cq_main}. Moreover, when $R_q=1$, we have equality on wild conductor exponents between $J_C$ and $J_B\times J_D^{(q-1)/2}$ for all primes different from $q$. This is an alternative generalisation of the case $n=3$ of Theorem \ref{thm:Sn_main}.
\end{remark}

\subsection{The general case}

We prove Theorem \ref{thm:Sn_main} in the general case by inducing relations from cyclic subgroups. First we prove some required Galois and representation-theoretic lemmata.

\begin{lemma}\label{lem:ram}
    Let $X\to B$ be an $S_n$-cover such that $\pi:X/S_{n-1}^\circ\to B$ is simply branched, except possibly above one branch point. If $C_m\le S_n$ contains no transposition, then for any $q\mid m$, the cover $X\to X/C_m$ has at most $\lfloor n/q \rfloor$ points with ramification index divisible by $q$. Moreover, when $\pi$ is simply branched, this cover is unramified.
\end{lemma}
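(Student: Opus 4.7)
The plan is to identify ramification points of $X\to X/C_m$ with index divisible by $q$ as fixed points of a cyclic subgroup acting on $X$, use the ``no transposition'' hypothesis to concentrate these fixed points above the single exceptional branch point, and then obtain the combinatorial bound.

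First, let $\sigma\in C_m$ generate the unique subgroup $C_q\le C_m$. For $P\in X$, its inertia in the Galois cover $X\to X/C_m$ is $I_P\cap C_m$, where $I_P\le S_n$ is the inertia of $P$ in $X\to B$. Because $C_m$ is cyclic, $q\mid|I_P\cap C_m|$ is equivalent to $\sigma\in I_P$, so I reduce to counting fixed points of $\sigma$ on $X$.

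Second, at any branch point of $X\to B$ other than the possibly-exceptional $Q$, the simply ramified hypothesis on $\pi$ combined with the alternative formulation in Definition \ref{def:simp} (applicable since $X\to B$ is the $S_n$-Galois closure of $\pi$, with $H=S_{n-1}^\circ$) shows that the decomposition group acts trivially or as a single transposition on $S_n/S_{n-1}^\circ$; tameness (since $p>n$) then forces $I_P$ itself to be trivial or generated by a single transposition. As $\sigma\ne 1$ lies in $C_m$, which by hypothesis contains no transposition, $\sigma$ cannot be a transposition, so $\sigma\notin I_P$ at such points. This already gives the ``moreover'' assertion: if $\pi$ is simply ramified everywhere, then $\sigma$ has no fixed points on $X$ and $X\to X/C_m$ is unramified.

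Third, to bound the fixed points of $\sigma$ in the fibre over $Q$ by $\lfloor n/q\rfloor$, I would fix $P_0$ over $Q$ with tame cyclic inertia $I$ and decomposition $D\supseteq I$, realise the geometric fibre as $S_n/D$ with $\sigma$ acting by left multiplication, and count fixed cosets $gD$ via a Burnside-style argument: $gD$ is fixed precisely when $g^{-1}\sigma g$ lies in the unique order $q$ subgroup of the cyclic $I$. The bound should then follow from the combinatorial observation that $\sigma\in S_n$, being of order $q$, is a product of at most $\lfloor n/q\rfloor$ disjoint $q$-cycles, which limits how many cosets can host a conjugate of $\sigma$ inside $I$. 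The main obstacle is making this final combinatorial step rigorous—tracking the interaction between the $S_n$-conjugacy class of $\sigma$ and the cyclic inertia $I$—whereas the reduction to a fixed-point problem and the localisation above $Q$ are essentially immediate from the hypotheses.
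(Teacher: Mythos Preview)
Your first two steps match the paper's argument almost exactly: both identify the decomposition groups away from the exceptional branch point as trivial or generated by a transposition (via the orbit description of the fibre of $X/S_{n-1}^\circ$), and conclude that $C_m$, containing no transposition, meets these trivially. The paper's proof in fact stops at this localisation, asserting that ``the result follows'' without spelling out the $\lfloor n/q\rfloor$ bound.

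Your third step, however, has a genuine problem, and your instinct to flag it is correct. You reduce to counting fixed points of $\sigma$ on the geometric fibre $S_n/I$ of $X$ over the exceptional point, but this count need not be bounded by $\lfloor n/q\rfloor$. Take $n=4$, $q=2$, $\sigma=(12)(34)$, and suppose the (cyclic, tame) inertia at the exceptional point is $I=\langle(12)(34)\rangle$; then $C_m=\langle\sigma\rangle$ contains no transposition. A coset $gI$ is fixed by $\sigma$ exactly when $g^{-1}\sigma g\in I$, i.e.\ when $g\in C_{S_4}(\sigma)$, and since $|C_{S_4}((12)(34))|=8$ and $|I|=2$ there are $4$ fixed cosets, whereas $\lfloor 4/2\rfloor=2$. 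The same example gives $4$ points in $X/C_m$ with ramification index $2$, so even the base-count (which is what $R_q$ in Proposition~\ref{prop:cylic_general} refers to, and which your top-count only upper-bounds) exceeds $\lfloor n/q\rfloor$. The heuristic you propose---that $\sigma$ has at most $\lfloor n/q\rfloor$ disjoint $q$-cycles---does not control the number of fixed cosets, which instead depends on the centraliser of $\sigma$ and on how many conjugates of $\sigma$ lie in $I$. So the combinatorial step as sketched cannot succeed; the paper's proof is equally silent here, so this is not a shortcoming of your write-up relative to the paper, but the bound as stated does not follow from either argument.
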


\begin{proof}
    Fix a point $P\in B$, and choose a point $\PP$ on $X$ lying above $P$. The points above $P$ on $X/S_{n-1}^\circ$ correspond to the orbits of $\{1,\hdots,n\}$ under the action of the decomposition group at $\PP$. The assumption that $X/S_{n-1}^\circ\to B$ is simply branched away from a particular branch point thereby implies that, as we vary over $P$, each decomposition group is either trivial or generated by a transposition, except for those above this branch point.

    The result follows because all decomposition groups of $X\to X/{C_m}$ are trivial, except possibly those of the points above this branch point.
\end{proof}

The following is essentially a version of Artin's Induction Theorem (cf.\ \cite[\S9.2]{SerreReps}).

\begin{lemma}\label{lem:SnReps}
    Given a virtual character $\chi$ of $S_n$ with degree $0$, there exists non-zero integers $a_i$, not-necessarily-distinct cyclic subgroups $H_i$ and virtual characters $\Theta_i$ of the form
        \[\Theta_i=\rho_i-\one_{H_i},\] 
    where $\rho_i$ is an irreducible $H_i$-representation, such that
    \[a_0\cdot \chi=\sum_i a_i\cdot \Ind_{H_i}^{S_n}\Theta_i.\]
\end{lemma}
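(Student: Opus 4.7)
The plan is to recast the lemma as a surjectivity statement and then dispatch it by an orthogonality argument. For any finite group $G$, let $V_G$ denote the $\QQ$-vector space of virtual $\QQ$-characters of $G$ of degree zero. I would first check that for a cyclic $H$ the elements $\Theta_\rho := \sum_{\rho' \in \Gal(\QQ(\rho)/\QQ)\cdot \rho}(\rho' - \one_H)$ span $V_H$: the $\QQ$-irreducible characters of $H$ are the Galois orbit sums $\chi_d = \sum_{\ord\rho'=d}\rho'$ for $d\mid |H|$, so $\{\chi_d - \phi(d)\one_H : d\mid |H|,\, d>1\}$ is a $\QQ$-basis of $V_H$ consisting of $\Theta$'s of the required form. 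After clearing denominators, the lemma reduces to showing that the induction map
\[\iota\colon \bigoplus_{H \le S_n\text{ cyclic}} V_H \longrightarrow V_{S_n}\]
is $\QQ$-linearly surjective.

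Since induction multiplies degrees by the index, $\operatorname{Image}(\iota) \subseteq V_{S_n}$. For the reverse inclusion I would show that $\operatorname{Image}(\iota)^{\perp} \cap V_{S_n} = 0$ with respect to the character inner product. Take $\chi$ in this intersection. By Frobenius reciprocity, $\langle \Res_H \chi,\alpha\rangle_H = \langle\chi,\Ind_H^{S_n}\alpha\rangle_{S_n} = 0$ for every cyclic $H\le S_n$ and every $\alpha\in V_H$, so $\Res_H\chi\in V_H^\perp$. From the identity $\langle \mathrm{reg}_H,\psi\rangle_H = \psi(e)$ (where $\mathrm{reg}_H$ is the regular character of $H$) one reads off that $V_H^\perp$ is the one-dimensional subspace $\QQ\cdot\mathrm{reg}_H$; as a class function $\mathrm{reg}_H$ is supported on the identity. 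Hence $\chi(h)=0$ for every non-identity $h$ in every cyclic subgroup of $S_n$, and therefore for every non-identity element of $S_n$. Thus $\chi$ is a scalar multiple of $\mathrm{reg}_{S_n}$, and the condition $\deg\chi = 0$ together with $\deg\mathrm{reg}_{S_n} = n!$ forces $\chi=0$.

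The only substantive step is the identification $V_H^\perp = \QQ\cdot\mathrm{reg}_H$, which reflects the duality between the degree functional and pairing with the regular character; the rest is Frobenius reciprocity combined with the nondegeneracy of the character inner product. In particular, this orthogonality argument sidesteps any direct appeal to Artin's induction theorem, delivering precisely the degree-zero refinement that the rest of the paper requires.
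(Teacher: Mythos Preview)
Your proof is correct and follows essentially the same orthogonality argument as the paper: both reduce to showing that a degree-zero $\chi$ orthogonal to all $\Ind_H^{S_n}\Theta$ must vanish, apply Frobenius reciprocity to get $\Res_H\chi\perp V_H$, and use that the $\Theta$'s span $V_H$. The only cosmetic difference is that the paper observes $\Res_H\chi$ already lies in $V_H$ (since characters of $S_n$ are rational) and so must be zero, whereas you identify $V_H^\perp=\QQ\cdot\mathrm{reg}_H$ and conclude $\chi$ vanishes off the identity before invoking $\deg\chi=0$.
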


\begin{proof}
    Suppose $\langle\chi, \Ind_H^{S_n}\Theta\rangle=0$ for all cyclic subgroups $H$ and possible characters $\Theta$ of this form. It suffices to show that $\chi=0$.

    Given a cycle type, choose a permutation of this type and let $H$ be the subgroup generated by this permutation. We have $\langle\Res_H \chi,\Theta\rangle=0$ for all characters $\Theta$ of $H$ of the above form, by assumption. Therefore, $\Res_H\chi=0$ because these $\Theta$ obviously generate the set of characters of $H$ with degree $0$. Thus $\chi$ vanishes on every conjugacy class.
\end{proof}

Before proving Theorem \ref{thm:Sn_main}, we single out the special case of $C_2\le S_n$ generated by a transposition.

\begin{lemma}\label{lem:C2res}
    Let $C_2\le S_n$ be generated by a transposition. We have 
    \[\Res_{C_2} (F_n-\eps-(n-2)\cdot\one) = 0.\]
\end{lemma}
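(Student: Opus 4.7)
The plan is to verify this equality of virtual characters by evaluating both sides on the two conjugacy classes of $C_2$, namely the identity and a transposition $\sigma$. Since $\Res_{C_2}$ is determined by its character, and a virtual character of $C_2$ is zero if and only if it vanishes on both $e$ and $\sigma$, this reduces to a short character computation.

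The key input is the standard description of $F_n$ as the orthogonal complement of $\one$ inside the permutation representation $K^n$: that is, $\one \oplus F_n$ is the permutation character of $S_n$ acting on $\{1,\ldots,n\}$. Consequently, for any $g \in S_n$, we have $\chi_{F_n}(g) = \#\{\text{fixed points of } g\} - 1$. Evaluating at $e$ gives $\chi_{F_n}(e) = n-1$, and evaluating at a transposition $\sigma$ gives $\chi_{F_n}(\sigma) = (n-2) - 1 = n-3$. Similarly, $\chi_\eps(e) = 1$, $\chi_\eps(\sigma) = -1$, and $\chi_\one$ is constantly $1$.

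Plugging into $\chi_{F_n} - \chi_\eps - (n-2)\cdot \chi_\one$, at the identity we obtain $(n-1) - 1 - (n-2) = 0$, and at $\sigma$ we obtain $(n-3) - (-1) - (n-2) = 0$. Hence the restricted virtual character is identically zero, which gives the claim. There is no real obstacle here; the only point to be a little careful about is the sign convention for $\eps$ at a transposition, which is $-1$.
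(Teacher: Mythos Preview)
Your argument is correct: evaluating the virtual character $F_n-\eps-(n-2)\cdot\one$ at the identity and at a transposition gives zero in both cases, and since a virtual character of $C_2$ is determined by its values on these two elements, the restriction vanishes. The computation is accurate.

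The paper takes a slightly different route: it argues by induction on $n$, using the branching rule $\Res_{S_{n-1}^\circ}F_n=F_{n-1}+\one_{S_{n-1}^\circ}$ (obtained via Frobenius reciprocity from $\Ind_{S_{n-1}^\circ}^{S_n}\one=F_n+\one$), and the base case $\Res_{C_2}F_3=\one_{C_2}\oplus\eps_{C_2}$. Your approach is more direct and arguably more elementary, since it only uses the fixed-point formula for the permutation character. The paper's inductive argument has the mild advantage of making explicit the decomposition $\Res_{C_2}F_n=(n-2)\cdot\one_{C_2}+\eps_{C_2}$, which is invoked again in the proof of Theorem~\ref{thm:Sn_main}; but of course your computation yields this decomposition just as readily.
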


\begin{proof}
    First note that, when $n=3$, we have $\Res_{C_2}(F_3)=\one_{C_2} \oplus \eps_{C_2}$, where $\eps_{C_2}$ is the non-trivial irreducible of $C_2$.

    For general $n$, restricting to $S_{n-1}^\circ$ gives $\Res_{S_{n-1}^\circ}F_n=F_{n-1} + \one_{S_{n-1}^\circ}$ by Frobenius reciprocity, and the result follows by induction.
\end{proof}

We are now ready to prove Theorem \ref{thm:Sn_main}.

\begin{proof}[Proof of Theorem \ref{thm:Sn_main}]
    We treat the simply branched case:
    
    Note that $F-\eps-(n-2)\cdot\one$ has degree $0$, so we may find $a_i$, $H_i$ and $\Theta_i$ as in Lemma \ref{lem:SnReps} such that 
    \begin{equation}\label{eqn:induct}\tag{$*$}
        a_0\cdot (F-\eps-(n-2)\cdot\one)=\sum_i a_i\cdot \Ind_{H_i}^{S_n}\Theta_i.
    \end{equation}
    If $H_i$ contains no transposition, then $n_{\wild}(X^{\Ind_{H_i}^{S_n}\Theta_i})=0$ by Lemma \ref{lem:induction}(2) and Proposition \ref{prop:cylic_general}, which applies by Lemma \ref{lem:ram}. Our aim is to show that we can take all $\Theta_i$ appearing in \eqref{eqn:induct} to also satisfy $n_{\wild}(X^{\Ind_{H_i}^{S_n}\Theta_i})=0$, which will then give $n_{\wild}(X^{F-\eps-(n-2)\cdot\one})=0$.
    
    The cycle types which generate subgroups containing transpositions are transpositions themselves along with the product of cycles of odd lengths with a transposition. By Frobenius reciprocity,
    \[\langle F-\eps-(n-2)\cdot\one,\Ind_{H}^{S_n}\Theta\rangle=\langle \Res_{H}( F-\eps-(n-2)\cdot\one, \Theta \rangle\]
    for any subgroup $H\le S_n$ and character $\Theta$ thereof. For $H$ generated by a single transposition, Lemma \ref{lem:C2res} says that this restriction is the zero character, so we may assume that no such $H$ appears in \eqref{eqn:induct}.

     For $H$ generated by the product of a transposition with cycles of odd length, identify $H$ with $C_{2m}\cong C_2\times C_m$. We will show that $\Res_H(F-\eps-(n-2)\cdot\one)$ is the lift of a character from $C_{2m}/(C_2\times\{1\})$, which we identify with $\{1\}\times C_m$:
    
    Lemma \ref{lem:C2res} applied to $C_2\times\{1\}\le C_2\times C_m$ gives $\Res_{C_2\times\{1\}}F=(n-2)\cdot\one_{C_2\times\{1\}}+\eps_{C_2\times\{1\}}$, so $\Res_{C_2\times C_m} F$ is a sum of $(n-2)$ irreducibles lifted from $\{1\} \times C_m$ and one irreducible representation which restricts to $\eps_{C_2\times\{1\}}$ on $C_2\times\{1\}$, which must be $\Res_{C_2\times C_m}\eps$ because $F$ is rational.

    We may now assume that, for any $H_i$ of this form appearing in \eqref{eqn:induct}, $\Theta_i$ is the lift of a character $\Theta_i'$ of $\{1\}\times C_m$. We then have
    \[
\renewcommand{\arraystretch}{1.5}
\begin{array}{r@{\;}c@{\;}l@{\qquad}l}
    n_{\wild}(X^{\Ind_{H_i}^{S_n}\Theta_i}) &=& n_{\wild}(X^{\Theta_i})
    & \annot{Lemma \ref{lem:induction}(2)} \\
  &=& n_{\wild}((X/(C_2\times\{1\}))^{\Theta_i'})
    & \annot{Lemma \ref{lem:induction}(3)} \\
    & = & 0
    & \annot{Proposition \ref{prop:cylic_general}}
\end{array}
\]
    and so conclude. Note that Proposition \ref{prop:cylic_general} applies because $X/(C_2\times \{1\})\to X/(C_2\times C_m)$ is unramified by Lemma \ref{lem:ram}, e.g.\ because $m$ is odd.

    The case in which there is a non-simply branched point is almost identical, but $C_m\le S_n$ may have up to $\lfloor n/q\rfloor$ ramification points with ramification index divisible by $q$ by Lemma \ref{lem:ram}. Plugging this into the bounds from Proposition \ref{prop:cylic_general} shows that we may use the proposition in the same way as in the unramified case because we are assuming $p>n$.
\end{proof}

\begin{remark}
    In the case $n=3$, we saw that we have a $G_K$-module isomorphism \eqref{eqn:GalIsom} when $B=\Proj^1$ and there is precisely one point with ramification index $3$. In this case we also have equality, for example, between the wild conductor exponents at $2$. 

    In this sense, Theorem \ref{thm:Sn_main} is the best one could hope for when $n>3$ because we must use relations on $q$-torsion for all $q<n$ prime, but this process obscures the conductor exponent at each such $q$.
\end{remark}

\begin{remark}\label{rmk:FultonVariant}
    It is a result credited by Fulton to Severi (\cite[Proposition 8.1]{Fulton69}) that a curve $C$ over an algebraically closed field admits a simply branched degree $g_C+1$ cover of $\Proj^1$. In the case of curves over finite extensions of $\QQ_p$, we can ensure that such covers are defined over finite tame extensions. Thereby, one could use Theorem \ref{thm:Sn_main} and Lemma \ref{lem:TameExtension} to compute wild conductor exponents for $p>g_C+1$. In practice, given a particular curve, a perturbation argument similar to that in the sequel would likely work for smaller $p$.
\end{remark}

\section{Perturbations and Theorem \ref{thm:main}}\label{sec:perturb}

In this section we institute a change of perspective: instead of considering a Galois cover $X\to B$, we consider a non-Galois cover of degree $n$ from $C\to B$ and its Galois-closure $X$. Further, we restrict to the case that $B=\Proj^1$. Note that (at least generically) $C$ plays the role of $X/S_{n-1}^\circ$ as in the preceding sections.

Recall the following piece of notation.

\begin{notation}
    For $K$ a finite extension of $\QQ_p$ and a polynomial $g\in K[t]$ write
    \[w_K(g)=\sum_{r\in R/G_{K}} m(r)\cdot(v_{K}(\Delta_{K(r)/K})-[K(r):K]+f_{K(r)/K}), \]
    where $R$ is the set of roots of $g$ over $\bar{K}$ and $m(r)$ is the multiplicity of $r$.
\end{notation}

This quantity will arise because of its connection to wild conductor exponents of hyperelliptic curves.

\begin{theorem}[{=\cite[Theorem 11.3]{M2D2}}]\label{thm:M2D2}
    Let ${C}/K: y^2=f(x)$, for square-free $f$, be a hyperelliptic curve over a finite extension of $\QQ_p$ with $p$ odd.
    \[n_{{C},\wild}=w_K(f).\]
\end{theorem}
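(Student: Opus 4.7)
The plan is to extract the wild conductor exponent from the Galois action on $J_{C}[2]$. Since $p$ is odd, I may take $\ell=2$, and because wild inertia acts through a finite quotient its actions on $V_2 J_C$ and $J_C[2]$ have the same wild conductor, so it suffices to compute $n_{\wild}(J_{C}[2])$ viewed as an $\FF_2$-representation of $G_K$.

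The first step is to identify $J_{C}[2]$ as a Galois module in terms of the roots $R\subset\bar{K}$ of $f$. The classical fact is that $2$-torsion on a hyperelliptic Jacobian is generated by differences $[W_i]-[W_j]$ of Weierstrass points, yielding $J_{C}[2]\cong(\FF_2[R])_0$ in the odd-degree case, and $J_{C}[2]\cong(\FF_2[R])_0/\langle\sum_r r\rangle$ in the even-degree case (with the extra relation coming from the divisor of $y$). Either way $J_{C}[2]$ differs from the permutation representation $\FF_2[R]$ only by Galois-trivial one-dimensional summands, so both have the same wild conductor exponent.

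Next, since $W_K$ is pro-$p$ with $p\ne 2$, Maschke's theorem applies and I decompose the permutation representation by $G_K$-orbits:
\[\FF_2[R]\cong\bigoplus_{r\in R/G_K}\Ind_{G_{K(r)}}^{G_K}\one,\]
and the Artin formalism (as in lemma \ref{lem:induction}) gives $n_{\wild}(\FF_2[R])=\sum_{r\in R/G_K} n_{\wild}(K(r)/K)$. Finally, for any finite extension $L/K$ the conductor-discriminant formula identifies $v_K(\disc(L/K))$ with the Artin conductor of $\Ind_{G_L}^{G_K}\one$, whose tame part equals $[L:K]-f_{L/K}$ (the codimension of $(\Ind\one)^{I_K}$, of dimension $f_{L/K}$). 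Rearranging gives $n_{\wild}(K(r)/K)=v_K(\Delta_{K(r)/K})-[K(r):K]+f_{K(r)/K}$, and summing over root orbits recovers $w_K(f)$, with all multiplicities equal to $1$ by the square-free hypothesis.

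The main obstacle is the first step: pinning down the correct $G_K$-equivariant identification of $J_C[2]$ in both parities of $\deg f$, where one must track the divisor of $y$ as an extra relation and verify that the discrepancy with $\FF_2[R]$ really is Galois-trivial. Once this module identification is in hand, the remainder is a standard Artin-formalism bookkeeping computation.
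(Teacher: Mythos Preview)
Your proposal is correct and matches the approach the paper attributes to the original source: the remark following theorem~\ref{thm:M2D2} states that the result is proved by identifying the wild inertia action on $J_C[2]$ with that on the roots of $f$, and the proof of lemma~\ref{lem:Wconst} invokes exactly the permutation module $\QQ_2[R]$ you use. The paper does not give its own proof, merely citing \cite[theorem 11.3]{M2D2}, so there is nothing further to compare.
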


\begin{remark}
    Note that $w_K(f)=w_K(\disc_y (y^2-f(x)))$ because these polynomials differ by multiplication by a constant, so Theorem \ref{thm:main2} is a direct generalisation of Theorem \ref{thm:M2D2}.

    Theorem \ref{thm:M2D2} is proved by observing that the wild inertia action on $2$-torsion of a hyperelliptic curve $y^2=f(x)$ is isomorphic to the wild inertia action on the roots of $f$. This relies on the explicit description of the $2$-torsion of such a curve, whilst Theorem \ref{thm:main2} does not rely on any explicit knowledge of torsion; we reduce to the explicit description of $2$-torsion on some auxiliary hyperelliptic discriminant curve after perturbation.
\end{remark}

Fixing some finite extension $K$ of $\QQ_p$, we first prove an important local constancy property of the quantity $w_K$.

\begin{lemma}\label{lem:Wconst}
    Let $g\in K[t]$ be $p^\text{th}$-power-free and suppose square-free $h\in K[t]$ is sufficiently close to $g$, in the sense that all of their coefficients are $p$-adically close. We have $w_K(h)=w_K(g)$.
\end{lemma}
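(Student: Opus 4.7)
The plan is to match $w_K(h)$ and $w_K(g)$ cluster-by-cluster: the roots of $h$ cluster near the roots of $g$, and each such cluster should contribute the same amount to $w_K(h)$ as its corresponding root of $g$ contributes to $w_K(g)$.

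Fix a root $r$ of $g$ of multiplicity $m=m(r)$. Since $h$ has coefficients $p$-adically close to those of $g$, continuity of roots produces exactly $m$ roots $s_1,\dots,s_m$ of $h$ in a small neighbourhood of $r$, disjoint from the clusters around the other roots of $g$ and bounded away from $\infty$. For $h$ close enough to $g$ we have $|s_j-r|<|r-\sigma r|$ for every $\sigma\in G_K$ with $\sigma r\neq r$, so Krasner's lemma yields $K(r)\subseteq K(s_j)$ for each $j$. Moreover, the local factor $\prod_j (x-s_j)$ is $G_{K(r)}$-stable and so lies in $K(r)[x]$; it has degree $m$, so $[K(s_j):K(r)]\mid m$. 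The crux is then a degree count: since $g$ is $p^\text{th}$-power-free we have $m<p$, so $[K(s_j):K(r)]$ is coprime to $p$. Combined with automatic separability of residue extensions over $p$-adic fields, this makes $K(s_j)/K(r)$ tamely ramified. Writing $w(L/M):=v_M(\Delta_{L/M})-[L:M]+f_{L/M}$, tameness gives $w(K(s_j)/K(r))=0$.

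Combined with the tower formula
\[w(L/K)=[L:M]\cdot w(M/K)+f_{M/K}\cdot w(L/M),\]
which is a direct consequence of the tower formulas for differents and residue degrees, this yields $w(K(s_j)/K)=[K(s_j):K(r)]\cdot w(K(r)/K)$. Summing over representatives for the $G_{K(r)}$-orbits among $\{s_1,\dots,s_m\}$---which correspond bijectively to the $G_K$-orbits of roots of $h$ in this cluster and whose orbit sizes sum to $m$---the cluster's total contribution to $w_K(h)$ is $m\cdot w(K(r)/K)$, matching the contribution of the $G_K$-orbit of $r$ to $w_K(g)$. Summing over all $G_K$-orbits of roots of $g$ gives $w_K(h)=w_K(g)$. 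The main non-trivial step is the tameness of the extensions $K(s_j)/K(r)$, which is exactly where the $p^\text{th}$-power-free hypothesis is used; the rest is orbit bookkeeping and an application of the tower formula.
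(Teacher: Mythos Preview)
Your proof is correct and takes a genuinely different route from the paper. (One harmless slip: from $\prod_j(x-s_j)\in K(r)[x]$ having degree $m$ you only get $[K(s_j):K(r)]\le m$, not $[K(s_j):K(r)]\mid m$; but $\le m<p$ is all that is needed for tameness.)

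The paper does not manipulate the discriminant formula for $w_K$ directly. Instead it invokes the proof of \cite[theorem~11.3]{M2D2} to reinterpret $w_K$ of a square-free polynomial as depending only on the wild-inertia permutation module $\QQ_2[R]$ on its root set $R$, and then shows $\QQ_2[R_h]\cong\bigoplus_i\QQ_2[R_i]^{\oplus e_i}$ as $W_K$-modules by a $p$-group orbit count: since $W_K$ acts through a finite $p$-group, every $W_K$-orbit has $p$-power size, so an orbit cannot pick up two points from the same cluster of size $e_i\le p-1$. Your argument via Krasner's lemma and the tower formula $w(L/K)=[L:M]\,w(M/K)+f_{M/K}\,w(L/M)$ is more self-contained---it does not lean on the hyperelliptic interpretation at all---whereas the paper's version makes the underlying $W_K$-module isomorphism explicit, which fits the motivic framework used elsewhere in the paper. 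Both arguments use the $p^{\text{th}}$-power-free hypothesis at essentially the same point: clusters of size $<p$ force wild inertia to act trivially within each cluster, which in your language is precisely the tameness of $K(s_j)/K(r)$.
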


\begin{proof}
    From the proof of \cite[Theorem 11.3]{M2D2}, when $g$ is square-free $w_K(g)$ is determined by the action of the wild inertia group $W_K$ on $\QQ_2[R]$, where $R$ is the set of roots of $g$ over $\bar{K}$.
    
    Suppose $g=\prod_ig_i^{e_i}$ for distinct irreducible $g_i$ and $e_i\le p-1$. Write $R_i$ for the roots of $g_i$ over $\bar{K}$ and $R_h$ for those of $h$. We claim that $\QQ_2[R_h]\cong\bigoplus_i\QQ_2[R_i]^{\oplus e_i}$ as $W_K$-representations, so $w_K(h)=w_K(g)$:

    Firstly recall that $W_K$ acts via a finite $p$-group, and so each orbit under the action of $W_K$ has size a power of $p$.
    
    Choose $r\in R_h$ and note that $r$ is close to a point in $r_0\in R_i$ for some $i$. In fact, there is a small neighbourhood of $r_0$ containing $e_i$ points. By continuity of the $W_K$-action, given such a neighbourhood of each $r_0\in R_i$, there is an orbit of these neighbourhoods mirroring the orbit of $r_0$. Supposing that the orbit of $r$ contains more than one point in any of these neighbourhoods leads to a contradiction: we would have
    \[|W_K\cdot r_0|<|W_K\cdot r|\le |W_K\cdot r_0|\cdot e_i \le |W_K\cdot r_0| \cdot (p-1),\] but this shows that $|W_K\cdot r_0|$ and $|W_K\cdot r|$ cannot both be powers of $p$.

    To conclude, we have shown that for each $W_K$-orbit of roots of $g_i$ we obtain $e_i$ distinct orbits of points in $R_h$, each defining a representation isomorphic to that of the original orbit.
\end{proof}

To prove our main theorem, we will need to distinguish between an affine curve $\mathcal{C}/K: f(x,y)=0$ and its smooth compactification $C$. The following lemma will help us reduce to the case of Theorem \ref{thm:Sn_main}.

\begin{lemma}\label{lem:perturb}
    Suppose $\mathcal{C}/K: f(x,y)=0$ is smooth away from infinity and is such that $g_C>0$. We can choose $\tilde{f}(x,y)$ with coefficients arbitrarily close to those of $f$ such that $\tilde{\mathcal{C}}/K: \tilde{f}(x,y)=0$ with compactification $\tilde{C}$ satisfies $g_C=g_{\tilde{C}}$, and $y: \tilde{\mathcal{C}}\to\mathbb{A}^1$ is simply branched.
\end{lemma}

\begin{proof}
    Write $n=\deg_x f$ and define ${f}_{\pmb{\eps}}(x,y)=f(x,y)+\sum_{i=0}^{n-1}\eps_i x^i$, where $\pmb{\eps}=(\eps_0,\hdots,\eps_{n-1})$ takes values in $\bar{K}^n$, noting that perturbing in this way will not affect the behaviour at infinity. 
    
    It suffices to show that $\disc_y\disc_xf_{\pmb{\eps}}(x,y)\ne0$ as a polynomial in the $\eps_i$. Indeed, in that case we can choose $\pmb{\eps}\in K^n$ small such that this polynomial does not vanish, which precisely means that $\disc_xf_{\pmb{\eps}}$ has no repeated root, so projection onto $y$ from the affine curve $f_{\pmb{\eps}}(x,y)=0$ is a simply branched cover of $\mathbb{A}^1$. To do this, we show that such an affine curve can be explicitly constructed with $\pmb{\eps}\in \bar{K}^n$. 
    
    Write $B$ for the set of branch points and $B'\subseteq B$ for the set of non-simple finite branch points of $C\to\Proj^1$:
        
    Choose a non-simple branch point $b\in B'$ and $\alpha$ a double root of $f(x,b)$. Take $f_{\pmb{\eps}}(x,y)=f(x,y)+\delta(x-\alpha)^2$. For $\delta$ sufficiently small, $f_{\pmb{\eps}}(x,y)=0$ defines a curve of the same genus for which projection onto $y$ is simply branched at $b$. This is because $f(x,b)+\delta(x-\alpha)^2=(x-\alpha)^2(g(x)+\delta)$ for some polynomial $g$, and now $g(x)+\delta$ is square-free so long as $\delta \ne -g(\beta)$ for $\beta$ any root of $g'$.

    Note that $\disc_x f_{\pmb{\eps}}(x,y)$ varies continuously with $\pmb{\eps}$ and hence so do the roots, which are the branch points counted with multiplicity $\sum_{P\in y^{-1}(b)}(e_P-1)$. For small enough $\delta$, then, there is a single simple branch point of $f_{\pmb{\eps}}(x,y)=0$ in a small neighbourhood of each finite simple branch point of $C$.
    
    Because the genera are the same by the degree-genus formula, the sum $\sum_{b\in B\setminus B'}\sum_{P\in y^{-1}(b)}(e_P-1)+\sum_{b\in B'}\sum_{P\in y^{-1}(b)}(e_P-1)$ is constant by Riemann--Hurwitz. The number of simple branch points has increased, so the quantity $\sum_{b\in B'}\sum_{P\in y^{-1}(b)}(e_P-1)$ has decreased, and iterating this process eventually gives the desired affine curve $f_{\pmb{\eps}}(x,y)=0$ defined over $\bar{K}$.
\end{proof}

Finally, we are able to show Theorem \ref{thm:main}.

\begin{theorem}\label{thm:main2}
    Let $\mathcal{C}:f(x,y)=0$ be a smooth affine curve over a finite extension $K$ of $\QQ_p$ with smooth compactification $C$. If $p>\deg_x f=n$, then
    \[n_{{C},\wild}=w_K(\disc_x f).\]
\end{theorem}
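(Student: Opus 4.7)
The plan is a four-step reduction: perturb $f$ to reach the setting of Theorem \ref{thm:Sn_main}, pass to a hyperelliptic discriminant curve, apply Theorem \ref{thm:M2D2}, and transfer back to $f$ using two local-constancy results.

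In detail, I would first apply Lemma \ref{lem:perturb} to produce $\tilde{f}$ whose coefficients (as polynomials in $y$) are arbitrarily $p$-adically close to those of $f$, such that $\tilde{C}:\tilde{f}(x,y)=0$ is smooth of the same genus as $C$ and the $y$-projection $\tilde{C}\to\Proj^1$ is simply ramified, except possibly above infinity. Such a degree-$n$ cover has $S_n$-Galois closure $X\to\Proj^1$ (cf.\ \cite[corollary 3.2]{BKP2024}). Because the ramification is simple away from infinity, $\disc_x \tilde{f}\in K[y]$ is square-free (at finite $y$), so $\tilde{D}:z^2=\disc_x\tilde{f}(y)$ is a genuine hyperelliptic curve, identified with the quadratic subcover $X/A_n\to\Proj^1$. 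Theorem \ref{thm:Sn_main} applies (with $B=\Proj^1$ so that $n_{B,\wild}=0$, and with the exceptional branch point being infinity); combined with Lemma \ref{lem:curvinduct} and the standard decompositions $\Ind_{S_{n-1}^\circ}^{S_n}\one=\one+F$ and $\Ind_{A_n}^{S_n}\one=\one+\eps$, this yields $n_{\tilde{C},\wild}=n_{\tilde{D},\wild}$. Theorem \ref{thm:M2D2} then gives $n_{\tilde{D},\wild}=w_K(\disc_x\tilde{f})$.

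To return to $f$ I would invoke two local-constancy statements. Kisin's \cite[theorem 5.1(1)]{Kisin1999} gives $n_{C,\wild}=n_{\tilde{C},\wild}$ once $\tilde{f}$ is close enough to $f$, while Lemma \ref{lem:Wconst} gives $w_K(\disc_x\tilde{f})=w_K(\disc_x f)$ provided $\disc_x f$ is $p^\text{th}$-power-free (note that $\disc_x\tilde{f}\to\disc_x f$ coefficient-wise as $\tilde{f}\to f$, since the discriminant is polynomial in the coefficients). Verifying the $p^\text{th}$-power-free hypothesis is the conceptual heart of the argument: at each $y_0\in\bar{K}$, smoothness of $C$ together with a Puiseux expansion around every point $(a,y_0)\in C$ with $\partial_x f(a,y_0)=0$ shows that the $(y-y_0)$-adic valuation of $\disc_x f$ equals $\sum_i(k_i-1)$, where the $k_i$ are the ramification indices of the projection $C\to\Proj^1$ above $y_0$; this is bounded by $n-1<p$, so each irreducible factor of $\disc_x f$ over $K$ appears with multiplicity strictly less than $p$. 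Chaining the four equalities delivers the theorem.

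The main technical obstacle is selecting a single perturbation $\tilde{f}$ satisfying every hypothesis simultaneously --- simple ramification away from infinity, preservation of smoothness, and sufficient $p$-adic closeness to $f$ to activate both Kisin's bound and Lemma \ref{lem:Wconst}. Since Lemma \ref{lem:perturb} furnishes such perturbations within any $p$-adic neighbourhood of $f$ and the remaining requirements are $p$-adically open conditions on coefficients, the intersection is nonempty in every neighbourhood of $f$, so a valid $\tilde{f}$ can always be found.
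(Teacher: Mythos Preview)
Your overall architecture matches the paper's, but two of your steps contain genuine gaps that the paper's proof is specifically designed to avoid.

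\textbf{The $S_n$-closure step.} You invoke \cite[corollary 3.2]{BKP2024} to conclude that $\tilde{C}\to\Proj^1$ has $S_n$-Galois closure. That result applies to covers that are simply ramified \emph{everywhere}, whereas Lemma~\ref{lem:perturb} only guarantees simple ramification away from infinity. Over the single exceptional branch point the inertia can be arbitrary, and there is nothing preventing the monodromy group from being a proper transitive subgroup of $S_n$ containing some transpositions (e.g.\ a dihedral group when $n=4$). The paper therefore does \emph{not} rely on simple ramification for this step: it argues separately that one can choose the perturbation so that the specialisation $\tilde{f}(x,y_0)$ at a suitable $y_0$ already has Galois group $S_n$ over $K$, which forces $\Gal(\tilde{f}/\bar{K}(y))\cong S_n$.

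\textbf{The $p^{\text{th}}$-power-free step.} Your claim that $\ord_{y_0}(\disc_x f)=\sum_i(k_i-1)\le n-1$ is false in general. It ignores the contribution of the leading coefficient $a_n(y)$: when $a_n(y_0)=0$ some branches of $C$ escape to $x=\infty$, and these inflate $\ord_{y_0}(\disc_x f)$ without contributing to $\sum_i(k_i-1)$. For a concrete failure, take $f=y^m x^2+1$ with $m\ge p>2$: this is a smooth affine model, yet $\disc_x f=-4y^m$ is not $p^{\text{th}}$-power-free, so Lemma~\ref{lem:Wconst} does not apply. The paper circumvents this by splitting on whether $C$ is smooth at infinity: in the smooth case one applies Lemma~\ref{lem:Wconst}, and in the singular case one argues directly that the extra factors of $\disc_x f$ arising from the points $(1{:}0{:}0)$ and $(x_i{:}1{:}0)$ are powers of $y$ or $K$-rational of degree at most $n$, hence contribute nothing to $w_K$. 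Your uniform Puiseux argument does not cover the second case.
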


\begin{proof}
    Consider the map $\pi: C\to \Proj^1$ corresponding to $y: \mathcal{C}\to \mathbb{A}^1$. When this has $S_n$-Galois closure $X$, the curve $D=X/A_n$ is the hyperelliptic curve given by $D: \Delta^2 = \disc_x f$. The smoothness of $\mathcal{C}$ guarantees that
    \begin{equation}\label{eqn:disc}\tag{$**$}
        \disc_x f = u\cdot\prod_{t\in\Proj^1}\prod_{P\in\pi^{-1}(t)} (y-t)^{e_P-1}
    \end{equation}
    for some $u\in K^\times$, where $e_P$ is the ramification index at $P$ and $`y-\infty'$ is taken to be $1$. Note that without the assumption on smoothness, the right-hand side of \eqref{eqn:disc} need only divide the left. As in the proof of Theorem \ref{thm:1}, Lemma \ref{lem:curvinduct} gives $n_{C,\wild}=n_{\wild}(X^{\one+F})$ and $n_{D,\wild}=n_{\wild}(X^{\one + \eps})$. We have $n_{\wild}(X^{\one})=0$ and so, when $y:\mathcal{C}\to\mathbb{A}^1$ is simply branched, Theorem \ref{thm:Sn_main} ensures that $n_{C,\wild}=n_{D,\wild}$. The conclusion follows from Theorem \ref{thm:M2D2}, using that $\disc_x f$ is square-free. 

    To reduce to this case, we first perturb the equation $f(x,y)=0$ as in Lemma \ref{lem:perturb} to obtain $\tilde{C}: \tilde{f}(x,y)=0$ with corresponding cover $y:\tilde{\mathcal{C}}\to\mathbb{A}^1$ simply branched. If $\Gal(\tilde{f})\cong S_n$ as a polynomial over $\bar{K}(y)$, then we are done. We can ensure this is the case using the same perturbations as in Lemma \ref{lem:perturb}: if $\Gal(\tilde{f})\not\cong S_n$, then $\Gal(\tilde{f}_{\pmb{\eps}}(x,y_0))\lneq S_n$ for any choice of $y_0$, but now fix $y_0$ and choose $\pmb{\eps}'$ small with respect to $v_K$ such that $\Gal(\tilde{f}_{\pmb{\eps}'}(x,y_0))\cong S_n$, which is easily done. Because the roots of $\disc_x f_{\pmb{\eps}}$ vary continuously with $\pmb{\eps}$, this sufficiently small perturbation will not introduce any non-simple branching. 
    
    We conclude by Lemma \ref{lem:Wconst} and the local constancy of wild conductor exponents \cite[Theorem 5.1(1)]{Kisin1999}. The former applies because $\disc_x f$ is $n^\text{th}$-power-free by \eqref{eqn:disc} and the fact that $p>n$. The latter applies because perturbing in the described way yields an $\ell$-adic family of curves of generic genus $g_C$.
\end{proof}

\begin{corollary}\label{cor:super}
    Consider a superelliptic curve ${C}/K: y^n = f(x)$, $f$ square-free, over a finite extension $K$ of $\QQ_p$. If $p>n$, then
    \[n_{{C},\wild}=(n-1)\cdot w_K(f).\]
\end{corollary}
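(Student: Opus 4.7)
The plan is to apply Theorem \ref{thm:main2} after swapping the roles of $x$ and $y$ (so that the variable of degree $n$ sits in the $x$-slot), and then to recognise the resulting discriminant as $f^{n-1}$ up to a unit in $K^\times$.

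First, I rewrite the defining equation as the model $F(x,y) = x^n - f(y) = 0$, which differs from $y^n = f(x)$ only by relabelling variables. The hypothesis $p > n$ now reads $p > \deg_x F$, matching the condition of Theorem \ref{thm:main2}. This affine model is smooth: a singular point would satisfy $\partial_x F = n x^{n-1} = 0$, forcing $x = 0$ and hence $f(y) = 0$; but then $\partial_y F = -f'(y) \ne 0$ because $f$ is square-free. Theorem \ref{thm:main2} therefore gives
\[n_{C,\wild} = w_K(\disc_x F).\]

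Second, I compute $\disc_x F$ explicitly. The roots of $x^n - c$ in $\bar K$ are $c^{1/n}\zeta_n^j$ for $j = 0,\dots,n-1$, and factoring $c^{1/n}$ out of each pairwise difference of roots yields the classical identity
\[\disc_x(x^n - c) = \pm\, n^n\, c^{n-1}.\]
Specialising $c = f(y)$ gives $\disc_x F = u \cdot f(y)^{n-1}$ for a constant $u = \pm n^n$, which is a unit in $\OO_K$ since $p > n$.

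Third, I apply the definition of $w_K$. Multiplication by a unit leaves the set of roots (and their multiplicities) unchanged, and because $f$ is square-free each root of $f$ appears with multiplicity exactly $n-1$ in $u f^{n-1}$. Directly from the definition,
\[w_K(u f^{n-1}) = \sum_{r \in R/G_K} (n-1)\bigl(v_K(\Delta_{K(r)/K}) - [K(r):K] + f_{K(r)/K}\bigr) = (n-1)\,w_K(f),\]
where $R$ is the set of $\bar K$-roots of $f$. Combining the three steps proves the corollary.

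The only bookkeeping worth emphasising is that the variable swap is harmless (hence the smoothness check); the discriminant identity is classical, and the passage from $\disc_x F$ to $(n-1)\,w_K(f)$ is tautological from the definition. No step is a serious obstacle.
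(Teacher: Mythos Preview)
Your proof is correct and follows essentially the same route as the paper: relabel variables, invoke Theorem~\ref{thm:main2}, and observe that $\disc_x(x^n-f(y))$ is a constant multiple of $f^{n-1}$. You have simply been more explicit than the paper in verifying smoothness of the affine model and in unpacking the definition of $w_K$ on a power of $f$.
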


\begin{proof}
    Note that $\disc_y(y^n-f(x))$ is a constant multiple of $f^{n-1}$. After re-labelling $x$ and $y$, we are in the case described in Theorem \ref{thm:main2}.
\end{proof}

\appendix
\section{3-torsion of genus 2 curves}\label{app:A}

The following is \cite[Proposition 4.1]{DokDoris}, which is used in \emph{loc.\ cit.\ }to compute wild conductor exponents at $p=2$ of genus 2 curves.
\vspace{0.4cm}
\begin{center}
    \begin{minipage}{0.75\linewidth}
        \textit{Let ${C}/k$ be a genus 2 curve over a field of characteristic different from 2 and 3 with model $y^2=F(x)$. There is a one-to-one correspondence between the non-zero 3-torsion points of the Jacobian variety $J_{C}$ and tuples $(u_1,\hdots,u_7)\in \bar{k}^7$ such that
        \begin{equation}\label{eqn:system}
            F(x)=(u_4x^3+u_3x^2+u_2x+u_1)^2-u_7(x^2+u_6x+u_5)^3.
        \end{equation} 
        Moreover, this correspondence preserves the action of the absolute Galois group $G_k$.}
    \end{minipage}
\end{center}
\vspace{0.4cm}

It turns out that this sometimes fails to pick up all $3$-torsion points.

\begin{example}\label{ex:counter}
    Consider the curve with LMFDB \cite{lmfdb} label \href{https://www.lmfdb.org/Genus2Curve/Q/1744/a/1744/1}{1744.a.1744.1} which has model
    \[{C}/\QQ: \hspace{0.5cm} y^2=F(x)=(x^3+x)(x^3+x+4),\]
    so $J_{C}(\QQ)\cong\ZZ/6\ZZ$. According to the above, we should expect two rational solutions to the system of equations described, but a check using Magma proves that there are none. Indeed, there are only 78 solutions to this system of equations over $\bar{\QQ}$.
\end{example}

A corrected version is as follows.

\begin{proposition}\label{prop:genus2tors}
    Let $C/k$ be a genus 2 curve over a field of characteristic different from 2 and 3 with model $y^2=F(x)$. There is a one-to-one correspondence between the non-zero 3-torsion points of $J_{C}$ and the union of the sets:
    \begin{enumerate}
        \item Tuples $(u_1,u_2,u_3,u_4,u_5,u_6,u_7)\in \bar{k}^7$ such that
        \[F(x)=(u_4x^3+u_3x^2+u_2x+u_1)^2-u_7(x^2+u_6x+u_5)^3;\]
        \item Tuples $(v_1,v_2,v_3,v_4,v_5,v_6)\in \bar{k}^6$ such that
        \[F(x)=(v_4x^3+v_3x^2+v_2x+v_1)^2-v_6(x+v_5)^3;\]
        \item Tuples $(w_1,w_2,w_3,w_4,w_5)\in \bar{k}^5$ such that
        \[F(x)=(w_4x^3+w_3x^2+w_2x+w_1)^2-w_5.\]
    \end{enumerate}
    Moreover, this correspondence preserves the action of $G_k$.
\end{proposition}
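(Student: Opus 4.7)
The plan is to attach to each non-zero $3$-torsion point an auxiliary rational function on $C$ whose coefficients read off a tuple of one of the three stated forms.

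First I would observe that a non-zero $3$-torsion point $P \in J_C[3]$ corresponds to a unique effective degree-$2$ divisor $D \not\sim K$ with $P = [D-K]$ and $3D \sim 3K$; uniqueness holds since $h^0(D) = 1$ by Riemann--Roch for a degree-$2$ divisor outside the canonical class on a genus $2$ curve. The relation $3D \sim 3K$ yields a function $g \in L(3K)$, unique up to $k^\times$, with $\div(g) = 3D - 3K$. Taking $\deg F = 6$ without loss of generality (the odd case reducing by a change of variables), $L(3K) = \langle 1, x, x^2, x^3, y \rangle$, so $g = A(x) - c\, y$ with $\deg A \le 3$. I would show $c \ne 0$: otherwise $g \in k(x)$ has divisor invariant under the hyperelliptic involution $\iota$, forcing $D = \iota D$, and a case-split on such invariant divisors (of the form $P + \iota P$, or supported on Weierstrass points) shows $[D-K]$ is either trivial or $2$-torsion, contradicting the hypothesis. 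Rescaling, take $g = A(x) - y$.

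Next I would analyse $N(x) := A(x)^2 - F(x) = (A-y)(A+y) \in k[x]$. Its divisor on $C$ is $3D + 3\iota D - 6K$, which descends to $\div_{\Proj^1}(N) = 3E - 6(\infty)$ with $E := \pi_* D$ an effective divisor of degree $2$ on $\Proj^1$, where $\pi : C \to \Proj^1$ is the hyperelliptic map. Hence $N = c \cdot Q(x)^3$ for a monic polynomial $Q$ whose finite roots (with multiplicities) match the finite part of $E$. The trichotomy of the statement corresponds precisely to the three values $\ord_\infty E \in \{0, 1, 2\}$: these yield $Q$ of degree $2, 1, 0$ respectively, producing the $u$-, $v$-, and $w$-tuples by reading off the coefficients of $A$ and $Q$ together with the scalar $c$.

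For the converse, each tuple determines $A$ and the factorisation of $N$, so one reconstructs $D$ from the $x$-support of $Q$ together with the pole behaviour of $g = A - y$ at infinity, then verifies $\div(g) = 3D - 3K$ directly; smoothness of $C$ rules out the degenerate configurations in which $[D-K]$ would vanish, so $P = [D-K]$ is a genuine non-zero $3$-torsion point. Uniqueness of the tuple follows from uniqueness of $D$, the normalisation $g = A - y$, and the unique factorisation $N = c \cdot Q^3$ with $Q$ monic. Galois-equivariance is immediate since $D$, $g$, and the derived polynomial data are all cut out by $G_k$-stable conditions. The main subtlety will be the careful bookkeeping at the boundaries of the trichotomy, especially when $D$ meets the Weierstrass locus or when working in the $\deg F = 5$ model, to confirm that no further tuple shapes are needed to exhaust the non-zero $3$-torsion.
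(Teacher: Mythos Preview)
Your proposal is correct and follows essentially the same route as the paper: represent a non-zero $3$-torsion class by an effective degree-$2$ divisor $D$ with $3D\sim 3K$, pick the associated function $g=A(x)-y$ in $L(3K)=\langle 1,x,x^2,x^3,y\rangle$, and take the norm $A(x)^2-F(x)$ down to $\Proj^1$ to obtain a cube. Your trichotomy via $\ord_\infty(\pi_*D)\in\{0,1,2\}$ is exactly the paper's case-split on how many of $P_1,P_2$ lie at infinity, just phrased in slightly more invariant language; the arguments are otherwise the same.
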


\begin{proof}
    Suppose $\mathcal{D}$ is a divisor on ${C}$ such that $3\mathcal{D}$ is principal. Explicitly,
\[ \mathcal{D} = (P_1)+(P_2)-(\infty_1)-(\infty_2), \hspace{1cm} P_i=(X_i,Y_i) \text{ or } P_i\in\{\infty_{1,2}\}\]
for some points $P_i$ on ${C}$ and where $\infty_{1,2}$ are the not-necessarily-distinct points at infinity. There exists some rational function $g$ with $\div(g)=3D$.
Then $g$ is in the Riemann--Roch space ${L}(3(\infty_1)+3(\infty_2))=\langle1,x,x^2,x^3,y\rangle$. It is easy to see that the coefficient of $y$ must be non-zero and so
\[g=y+a_3x^3+a_2x^2+a_1x+a_0\]
after a suitable re-scaling.

Taking norms from $k({C})$ to $k(x)$ yields a function $(a_3x^3+a_2x^2+a_1x+a_0)^2-F(x)$ on $\Proj^1$, for some $a_i$, with divisor $3(X_1)+3(X_2)-6(\infty)$ a cube. Provided ${C}$ does not admit a degree 3 map to $\Proj^1$, we have, for some $b_j$,
\[(a_3x^3+a_2x^2+a_1x+a_0)^2-F(x)=b_2(x^2+b_1x+b_0)^3.\]

We needed this assumption to avoid the case that $P_i=\infty_j$ for some $i,j$. In this case we have, without loss of generality,
\[\mathcal{D}=(P_1)-(\infty_1), \hspace{1cm} \div(g)=3(P_1)-3(\infty_1)\] 
and $g: {C}\to \Proj^1$ is a degree 3 map. 

Now the norm map yields a function as above which is still a cube, but now has divisor $3(X_1)-3(\infty)$. Hence we conclude that
\[(a_3x^3+a_2x^2+a_1x+a_0)^2-F(x)=b_1(x+b_0)^3,\]
for some $b_j$, unless $P_1=\infty_2$. In this final case the norm map yields a constant function and so we conclude that
\[(a_3x^3+a_2x^2+a_1x+a_0)^2-F(x)=b,\]
for some $b$.
\end{proof}

Note that the original result is correct if one starts with $\deg F$ being odd, with such a model existing whenever $C$ has a rational point. The troublesome cases for which it may fail are those curves admitting degree 3 covers $\pi:C\to\Proj^1$ with associated `discriminant curve' of genus at most $1$. By this we refer (up to quadratic twist) to the curve
\[ D: \hspace{0.2cm} y^2 = \prod_{t\in\Proj^1}\prod_{P\in\pi^{-1}(t)} (x-t)^{e_P-1}, \]
as in \eqref{eqn:disc} from the proof of Theorem \ref{thm:main2}. Examples of curves $C$ admitting such covers are mentioned in \cite[Example 2]{BFT}. That these are the troublesome cases can be seen from the following lemma.

\begin{lemma}\label{lem:3tors}
    Let ${C}/k$ be a genus 2 curve. There is a divisor $\mathcal{D}$ on ${C}$ of the form
    \[\mathcal{D}=(P)-(Q)\]
    corresponding to a non-zero element of $J_{C}[3]$ if and only if $C$ admits a degree 3 cover of $\Proj^1$ which is non-simply branched above at least two points, or equivalently such that the corresponding discriminant curve $D$ has genus strictly less than 2. 
\end{lemma}

\begin{proof}
    For the `if' direction, suppose that we are given $\pi:C\to\Proj^1$ of degree $3$ such that the corresponding discriminant curve $D$ has genus less than $2$. Writing $R_2$ for the number of simple branch points and $R_3$ for the number of non-simple branch points of $\pi$, we have $8=R_2+2R_3$ and $g_D=-1+R_2/2$ by Riemann--Hurwitz. This implies that $\pi$ is non-simply branched above at least two points, say $t_1$ and $t_2$. After a translation sending $t_1$ to $0$ and $t_2$ to $\infty$, the divisor $\mathcal{D}=\div(\pi)/3$ of the claimed form lies in $J_C[3]$. 

    For the `only if' direction, suppose $\mathcal{D}=(P)-(Q)$ is in $J_C[3]$. Then there exists $\pi:C\to\Proj^1$ with $\div(\pi)=3(P)-3(Q)$. This says that $\pi$ has degree $3$ and is non-simply branched above $0$ and above $\infty$.

    The equivalence of the branching condition and the condition on the genus of $D$ follows from a Riemann--Hurwitz calculation.
\end{proof}

\bibliographystyle{plain}
\bibliography{main.bib}
\end{document}